\theoremstyle{plain}
\newtheorem{theorem}{Theorem}
\newtheorem{lemma}[theorem]{Lemma}
\newtheorem{prop}[theorem]{Proposition}
\newtheorem{corollary}[theorem]{Corollary}
\theoremstyle{definition}
\newtheorem{definition}[theorem]{Definition}
\theoremstyle{remark}
\newtheorem{remark}[theorem]{Remark}
\newcommand{\lp}{\left(}
\newcommand{\rp}{\right)}
\newcommand{\e}{\epsilon}
\newcommand{\R}{\mathbb{R}}
\newcommand{\C}{\mathbb{C}}
\newcommand{\Q}{\mathbb{Q}}
\renewcommand{\O}{\mathbb{O}}
\newcommand{\G}{\mathbb{G}}
\renewcommand{\H}{\mathbb{H}}
\newcommand{\Hy}{\mathbb{H}}
\newcommand{\Sp}{\mathbb{S}}
\newcommand{\acts}{\curvearrowright}
\newcommand{\N}{\mathbb{N}}
\newcommand{\ti}{\textit}
\newcommand{\grad}{\nabla}
\DeclareMathOperator{\diam}{\textup{\text{diam}}}
\DeclareMathOperator{\CAT}{\textup{\text{CAT}}}
\DeclareMathOperator{\cdim}{\textup{conf.dim}}
\DeclareMathOperator{\AR}{\textup{AR}}
\DeclareMathOperator{\Isom}{\textup{Isom}}
\DeclareMathOperator{\Hom}{\textup{Hom}}
\DeclareMathOperator{\Tan}{\textup{Tan}}
\DeclareMathOperator{\Hdim}{\textup{Hdim}}
\numberwithin{equation}{section}
\numberwithin{theorem}{section}
\begin{document}

\title[Rigidity for convex-cocompact actions]{Rigidity for convex-cocompact actions on rank-one symmetric spaces}
\author{Guy C. David}
\address{Courant Institute of Mathematical Sciences, New York University, New York, NY 10012}
\email{guydavid@math.nyu.edu}

\author{Kyle Kinneberg}

\address{Department of Mathematics, Rice University, 6100 Main St., Houston TX 77005}
\email{kyle.kinneberg@rice.edu}

\subjclass[2010]{Primary: 53C24, 53C35; Secondary: 53C17, 53C23}
\date{\today}
\keywords{convex-cocompact action, rank-one symmetric space, Carnot group}

\begin{abstract}
When $\Gamma \acts X$ is a convex-cocompact action of a discrete group on a non-compact rank-one symmetric space $X$, there is a natural lower bound for the Hausdorff dimension of the limit set $\Lambda(\Gamma) \subset \partial X$, given by the Ahlfors regular conformal dimension of $\partial \Gamma$. We show that equality is achieved precisely when $\Gamma$ stabilizes an isometric copy of some non-compact rank-one symmetric space in $X$ on which it acts with compact quotient. This generalizes a theorem of Bonk--Kleiner, who proved it in the case that $X$ is real hyperbolic.

To prove our main theorem, we study tangents of Lipschitz differentiability spaces that are embedded in a Carnot group $\G$. We show that almost all tangents are isometric to a Carnot subgroup of $\G$, at least when they are rectifiably connected. This extends a theorem of Cheeger, who proved it for PI spaces that are embedded in Euclidean space.
\end{abstract}

\maketitle

\section{Introduction}\label{intro}

A classic theorem of Bowen \cite{Bo79} states that the limit set of a convex-cocompact action of a Fuchsian group on $\H_\R^3$ has Hausdorff dimension at least 1, and equality holds precisely when the limit set is a round circle in $\Sp^2 = \partial \H_\R^3$. In this case, the convex-cocompact action stabilizes an isometrically embedded copy of $\H_\R^2$ in $\H_\R^3$. Expressed another way, this result means that among all quasi-Fuchsian representations of a Fuchsian group, the Hausdorff dimension of the limit set attains its minimum precisely on the Fuchsian locus.

Bowen's theorem was later generalized to convex-cocompact actions of Fuchsian groups on any $\CAT(-1)$ metric space by Bonk and Kleiner \cite[Theorem 1.1]{BKqf}. This answered a question of Bourdon, who had proven that the analogous result holds for any uniform lattice in a non-compact rank-one symmetric space $S \neq \Hy_\R^2$ \cite[Theorem 0.3]{Bou}. To state their results precisely, we need some notation. If $X$ is a $\CAT(-1)$ space, then its visual boundary $\partial X$ admits a natural class of M\"obius equivalent metrics 
$$d_p(x,y) = e^{-(x,y)_p},$$ 
where $p \in X$ and $(x,y)_p$ denotes the Gromov product based at $p$. In particular, these metrics are bi-Lipschitz equivalent. The Hausdorff dimension of $\partial X$, and also of any subset, is therefore well-defined. If $\Gamma \acts X$ is a convex-cocompact action by a discrete group $\Gamma$, then the limit set $\Lambda(\Gamma)$ of the action is a subset of $\partial X$ that is invariant under the induced boundary action on $\partial X$.

The following statement puts together the results of Bourdon (all cases $S \neq \Hy_\R^2$) and Bonk--Kleiner (the $S=\Hy_\R^2$ case).

\begin{theorem}[\cite{Bou}, \cite{BKqf}] \label{Bouthm}
Let $\Gamma$ be a uniform lattice in a non-compact rank-one symmetric space $S$, and suppose that $\Gamma \acts X$ is a convex-cocompact action on a $\CAT(-1)$ space $X$. Then
$$\Hdim(\Lambda(\Gamma)) \geq \Hdim(\partial S),$$ 
and equality holds if and only if the action stabilizes an isometrically embedded copy of $S$ in $X$ on which $\Gamma$ acts with compact quotient.
\end{theorem}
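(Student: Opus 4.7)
My plan is to separate the lower bound from the rigidity statement, handling both through the Ahlfors regular conformal dimension of $\partial S$.

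For the lower bound, I would note that the convex-cocompact action $\Gamma \acts X$ induces a $\Gamma$-equivariant quasisymmetric embedding $\phi: \partial \Gamma \to \partial X$ (with respect to visual metrics) whose image is exactly $\Lambda(\Gamma)$. The Švarc-Milnor lemma identifies $\partial \Gamma$ with $\partial S$ up to bi-Lipschitz equivalence, so $\phi$ descends to a quasisymmetric map $\partial S \to \Lambda(\Gamma) \subset \partial X$. By the standard principle that quasisymmetries cannot decrease Hausdorff dimension below the Ahlfors regular conformal dimension of the source, and by the fact that $\partial S$ realizes its AR conformal dimension---trivially when $S = \H_\R^2$ since $\partial S \cong \Sp^1$, and by a theorem of Pansu otherwise---we conclude $\Hdim(\Lambda(\Gamma)) \geq \Hdim(\partial S)$.

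For the rigidity in the equality case, the strategy is to show that $\phi$ is infinitesimally conformal almost everywhere and then promote this to an isometric embedding $S \hookrightarrow X$. When $S \neq \H_\R^2$, the space $\partial S$ is Ahlfors $Q$-regular and $Q$-Loewner (where $Q = \Hdim(\partial S)$), so equality in the dimension bound combined with the Loewner property forces $\phi$ to be absolutely continuous, with a metric derivative that is a similarity at almost every point (a ``1-QC'' conclusion). A Pansu-type differentiation argument then identifies the blow-ups of $\phi$ with Carnot-group similarities. When $S = \H_\R^2$, the target $\Lambda(\Gamma)$ is an Ahlfors $1$-regular quasicircle, and Bonk-Kleiner's modulus argument shows it must in fact be a metric circle bi-Lipschitz to $\Sp^1$.

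The main obstacle is precisely this translation of the equality $\Hdim(\Lambda(\Gamma)) = \Hdim(\partial S)$ into a pointwise conformality statement for $\phi$. Since $X$ is assumed only $\CAT(-1)$, the target $\partial X$ carries no a priori smooth or self-similar structure, so one cannot reason by importing a differentiable structure from the ambient space; instead, one must reason intrinsically on $\Lambda(\Gamma)$ and transport the differentiation theory from $\partial S$. The key technical content is a rigidity for extremal metrics in a quasisymmetric class: any measurable ``stretching'' of the metric derivative of $\phi$ would produce, via a covering argument, a QS deformation of $\Lambda(\Gamma)$ of strictly smaller Hausdorff dimension, contradicting the assumption that the infimal dimension is already attained.

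Finally, once $\phi$ is known to be infinitesimally conformal (or a genuine isometry up to scale in the $\H_\R^2$ case), I would construct the isometric copy of $S$ inside $X$ as the convex hull $\conv(\Lambda(\Gamma))$. The $\CAT(-1)$ hypothesis on $X$ lets one extend the conformal boundary data via Busemann functions to geodesics interior to $X$; assembling these geodesics produces a totally geodesic, $\Gamma$-invariant isometric copy of $S$. Cocompactness of the $\Gamma$-action on this copy is immediate from convex-cocompactness of the original action, completing the equality case.
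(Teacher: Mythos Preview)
Your treatment of the inequality is fine and matches the paper's discussion: Pansu's theorem shows $\partial S$ realizes its Ahlfors regular conformal dimension, and the boundary map is quasisymmetric, so $\Hdim(\Lambda(\Gamma)) \geq \cdim_{\AR}(\partial S) = \Hdim(\partial S)$.

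The rigidity argument, however, has a real gap. You propose to deduce from the dimension equality that $\phi$ is ``infinitesimally conformal almost everywhere'' via a metric-differentiation/stretching argument, and then extend. There are two problems. First, as you yourself note, $\partial X$ has no Carnot or differentiable structure, so there is no Pansu-type derivative on the target side; your blow-ups of $\phi$ live in tangents of a general $\CAT(-1)$ boundary, and nothing identifies them with Carnot similarities. Second, the ``any stretching would lower Hausdorff dimension'' heuristic is not a theorem in this generality: minimality of $\Hdim(\Lambda(\Gamma))$ in the quasisymmetric class does not by itself force a given quasisymmetry $\partial S \to \Lambda(\Gamma)$ to be $1$-quasiconformal. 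What it gives you (via the Loewner machinery) is that $\Lambda(\Gamma)$ is $Q$-Loewner, not that $\phi$ is conformal.

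The argument actually used by Bourdon (and summarized in the paper as Lemma~\ref{bourdonlemma}) runs through the \emph{group action}, not through $\phi$ directly. One conjugates the M\"obius action $\Gamma \acts \Lambda(\Gamma)$ across $\phi$ to obtain a uniformly quasi-M\"obius action $\Gamma \acts \partial S$ that is cocompact on triples. Rigidity theorems for quasiconformal groups on $\partial S$ (Pansu for the quaternionic/octonionic cases, Sullivan--Tukia and Chow for the real and complex cases) then conjugate this to a genuine M\"obius action; Bourdon shows the conjugating map is itself M\"obius, and his Theorem~0.1 extends any M\"obius embedding $\partial S \hookrightarrow \partial X$ to an isometric embedding $S \hookrightarrow X$. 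The group equivariance is the essential ingredient you are missing --- without it, there is no mechanism to upgrade a dimension-minimal quasisymmetry to a M\"obius map in a general $\CAT(-1)$ target.
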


We should note that the case of equality is also characterized by M\"obius equivalence of the limit set $\Lambda(\Gamma)$ and $\partial S$ \cite[Theorem 0.1]{Bou}.

\begin{remark}
The notation $\Lambda(\Gamma)$ is slightly misleading, as the limit set depends on the action of $\Gamma$ on $X$, not just on the group $\Gamma$. More accurately, one should think of the action as a representation of $\Gamma$ into the isometry group $\Isom(X)$, so the limit set is really the limit set of this representation. In many important cases, the space of convex-compact representations forms a moduli space with a rich geometric structure, and one can ask interesting questions about how the limit sets change as the representations vary (e.g. \cite[Corollary 1.8]{BCLS}). However, as we focus only on a single action/representation at a time, we do not make this distinction explicit, and we retain the notation $\Lambda(\Gamma)$.
\end{remark}

More recently, there has been interest in knowing whether similar types of rigidity phenomena occur when $\Gamma$ is not assumed, a priori, to be a lattice \cites{BKqm, Kin}. The first difficulty encountered is to determine a natural lower bound for $\Hdim(\Lambda(\Gamma))$. For example, free groups admit convex-cocompact actions on any non-compact rank-one symmetric space with $\Lambda(\Gamma)$ equal to a Cantor set of arbitrarily small Hausdorff dimension. More explicitly, choosing loxodromic elements $g_1, \ldots ,g_k \in \Isom(S)$ with pairwise disjoint fixed points, for large enough $n$ the subgroup generated by $g_1^n,\ldots,g_k^n$ will be a free group of rank $k$, and the Hausdorff dimension of the limit set will tend to 0 as $n$ goes to infinity.

In the case of Theorem \ref{Bouthm}, the inequality comes from a result of Pansu \cite[Theorem 5.5]{Pancd}. In standard formulation, Pansu's result says that the Hausdorff dimension of any metric space that is quasisymmetrically equivalent to $\partial S$ must be at least as large as $\Hdim(\partial S)$. Stated another way, this means that whenever $\Gamma$ is (virtually) a uniform lattice in $S$, the Ahlfors regular conformal dimension $\cdim_{\AR}(\partial \Gamma)$ of its Gromov boundary is equal to $\Hdim(\partial S)$. On the other hand, we note that $\cdim_{\AR}(\partial \Gamma) = 0$ whenever $\Gamma$ is a free group.

It makes sense, then, that the appropriate generalization of Theorem \ref{Bouthm} to Gromov hyperbolic groups would make use of the conformal dimension. In a different paper \cite{BKconf}, Bonk and Kleiner established the following result, which concerns actions of hyperbolic groups on real hyperbolic space.

\begin{theorem}[\cite{BKconf}, Theorem 1.4]\label{BKthm}
Let $\Gamma$ be a non-elementary Gromov hyperbolic group, and let $Q = \cdim_{\AR}(\partial \Gamma)$. If $\Gamma \acts \Hy_\R^{n+1}$ is a convex-cocompact action, with $n \geq 1$, then
$$\Hdim(\Lambda(\Gamma)) \geq Q$$
and equality holds if and only if $Q = k \geq 1$ is an integer and $\Gamma$ stabilizes an isometric copy of $\H_\R^{k+1}$ in $\H_\R^{n+1}$ on which it acts with compact quotient.
\end{theorem}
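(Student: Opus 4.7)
The plan is to establish the lower bound $\Hdim(\Lambda(\Gamma)) \geq Q$ by direct appeal to the definition of Ahlfors regular conformal dimension, then handle the equality case in three stages: first, upgrading equality to a Poincar\'e inequality on the limit set; second, applying Cheeger's tangent theorem to identify the limit set with a Euclidean object at small scales; and third, promoting this to a global rigidity statement using the Möbius dynamics on $\Sp^n$.

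For the inequality, convex cocompactness implies that the orbit map $\Gamma \to \Gamma p \subset \H_\R^{n+1}$ induces a $\Gamma$-equivariant quasisymmetry between $\partial \Gamma$ and $\Lambda(\Gamma) \subset \Sp^n = \partial \H_\R^{n+1}$ in the visual metric. By Patterson--Sullivan theory (in the form due to Sullivan and Coornaert), the canonical measure on $\Lambda(\Gamma)$ is Ahlfors regular of dimension $\delta(\Gamma) = \Hdim(\Lambda(\Gamma))$. Since $\Lambda(\Gamma)$ is therefore an Ahlfors regular metric space in the quasisymmetry class of $\partial \Gamma$, the very definition of AR conformal dimension yields $\Hdim(\Lambda(\Gamma)) \geq \cdim_{\AR}(\partial \Gamma) = Q$.

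For the equality case, suppose $\Hdim(\Lambda(\Gamma)) = Q$, so that $\Lambda(\Gamma)$ is Ahlfors $Q$-regular and attains its AR conformal dimension. The first nontrivial step is to show that $\Lambda(\Gamma)$ supports a weak $(1,Q)$-Poincar\'e inequality. This comes from the Bonk--Kleiner/Keith--Laakso analysis of conformal dimension: a complete, doubling, Ahlfors $Q$-regular metric space that attains $\cdim_{\AR}$ and has sufficiently connected weak tangents must be a PI space. The required tangent connectivity follows from the cocompactness of $\Gamma$ on the convex hull, together with non-elementarity, which supplies abundant quasi-self-similar copies of $\Lambda(\Gamma)$ at every scale. (If $Q \leq 1$ one argues separately: $Q = 0$ forces $\Gamma$ virtually free with a totally disconnected limit set, while $Q = 1$ is ruled out unless $\Lambda(\Gamma)$ is a round circle.) With the Poincar\'e inequality in hand, Cheeger's theorem on tangents of PI subsets of $\R^N$ applies to $\Lambda(\Gamma)$ viewed locally in $\R^n$ via stereographic projection: at almost every $\xi \in \Lambda(\Gamma)$, every measured tangent is isometric to a Euclidean subspace. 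Combined with $Q$-regularity, this forces $Q = k$ to be a positive integer and the tangent at a.e. $\xi$ to be $\R^k$.

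The final stage is to globalize. The $\Gamma$-action on $\Sp^n$ is by Möbius transformations, so at any attracting fixed point $\xi_g$ of a loxodromic $g \in \Gamma$ the derivative $Dg(\xi_g)$ is a contracting conformal linear map; iterating $g$ produces a strict self-similarity of the (unique) tangent of $\Lambda(\Gamma)$ at $\xi_g$. Since loxodromic fixed points are dense in $\Lambda(\Gamma)$ and the set of "good" points where the tangent is $\R^k$ is $\Gamma$-invariant of full measure, we can find $\xi_g$ with both properties; the self-similar tangent is then a $g$-invariant $k$-plane. Minimality of $\Gamma \acts \Lambda(\Gamma)$ and the rigidity of Möbius transformations on $\Sp^n$ then force $\Lambda(\Gamma)$ to be a round $k$-sphere. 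Its convex hull in $\H_\R^{n+1}$ is a totally geodesic isometric copy of $\H_\R^{k+1}$, automatically $\Gamma$-invariant, and convex cocompactness on $\H_\R^{n+1}$ transfers to cocompactness on this subspace. The main obstacle is the first step in the equality case: extracting a Poincar\'e inequality from the attainment of the AR conformal dimension. This is where the heaviest machinery sits, and where any generalization to other rank-one symmetric spaces demands new work — namely, the Carnot-group tangent theorem advertised in the abstract, which replaces Cheeger's Euclidean version in the next stage of the argument.
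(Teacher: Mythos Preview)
Your outline through the first two stages matches the approach the paper attributes to Bonk--Kleiner: the inequality follows because $\Lambda(\Gamma)$ is Ahlfors regular and quasisymmetric to $\partial\Gamma$; equality with $Q>1$ yields a Poincar\'e inequality on $\Lambda(\Gamma)$ via \cite[Theorem 1.3]{BKconf}; and Cheeger's theorem then forces almost every tangent to be a $k$-plane with $Q=k$ an integer. The $Q\leq 1$ case is dispatched essentially as you indicate, via \cite{BKqf}.

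The gap is in your final globalization stage. You claim that since loxodromic fixed points are dense and the set of ``good'' points (those with unique tangent $\R^k$) has full measure, one can find a loxodromic fixed point $\xi_g$ that is also good. But loxodromic fixed points form a \emph{countable} set, so density alone does not guarantee intersection with a full-measure set --- this step fails as written. Even granting such a $\xi_g$, the passage from ``the tangent at $\xi_g$ is a $g$-invariant $k$-plane'' to ``$\Lambda(\Gamma)$ is a round $k$-sphere via minimality and M\"obius rigidity'' is not a standard argument and would require substantial additional work.

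The paper's account of Bonk--Kleiner's actual endgame avoids this entirely. Once some tangent of $\Lambda(\Gamma)$ is known to be isometric to $\R^k$, one invokes \cite[Proposition 3.1]{BKqf}: every tangent of the limit set is isometric to a \emph{parabolic limit set} $(\Lambda(\Gamma)\setminus\{\omega\}, d_{\omega,q})$ for some $\omega\in\Lambda(\Gamma)$. Thus a parabolic limit set is itself isometric to $\R^k$, and $\Lambda(\Gamma)$ is bi-Lipschitz equivalent to $\Sp^k$. One then applies \cite[Theorem 1.1]{BKqm} --- the rigidity theorem for uniformly quasi-M\"obius actions cocompact on triples --- to conclude that $\Lambda(\Gamma)$ is a round $k$-sphere. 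This route exploits the special structure of tangents to limit sets in $\CAT(-1)$ spaces rather than attempting to locate good points among the (countable) loxodromic fixed points.
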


Our statement of this theorem is slightly different from that given in \cite{BKconf}, but they are easily seen to be equivalent. We make this clear in Remark \ref{tfaermk}.

The main goal of the present paper is to extend Theorem \ref{BKthm} to actions on any non-compact rank-one symmetric space. 

\begin{theorem} \label{mainthm}
Let $\Gamma$ be a non-elementary Gromov hyperbolic group, and let $Q = \cdim_{\AR}(\partial \Gamma)$. If $\Gamma \acts X$ is a convex-cocompact action on a non-compact rank-one symmetric space $X$, then
$$\Hdim(\Lambda(\Gamma)) \geq Q,$$
and equality holds if and only if $\Gamma$ stabilizes an isometric copy of a non-compact rank-one symmetric space $S$ in $X$ on which it acts with compact quotient.
\end{theorem}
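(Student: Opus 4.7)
The lower bound is a consequence of two standard facts. For the convex-cocompact action $\Gamma \acts X$, Patterson--Sullivan theory equips $\Lambda(\Gamma)$ (with its visual metric) with an Ahlfors regular measure of dimension $\Hdim(\Lambda(\Gamma))$, and the boundary extension of the action identifies $\Lambda(\Gamma)$ quasisymmetrically with $\partial \Gamma$. Hence $Q = \cdim_\AR(\partial \Gamma) \leq \Hdim(\Lambda(\Gamma))$. The ``if'' direction of the equality statement is then immediate: when $\Gamma$ stabilizes an isometric copy $S \subset X$ with compact quotient, $\Lambda(\Gamma) = \partial S$, and Pansu's theorem combined with Theorem~\ref{Bouthm} yields $\Hdim(\partial S) = \cdim_\AR(\partial S) = \cdim_\AR(\partial \Gamma) = Q$.

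For the ``only if'' direction, assume $\Hdim(\Lambda(\Gamma)) = Q$. The plan is to analyze tangents of $\Lambda(\Gamma)$ inside the Carnot boundary of $X$. First, I would use that $\cdim_\AR(\partial \Gamma)$ is attained by the Hausdorff dimension of the AR space $\Lambda(\Gamma)$ to equip the latter with a nontrivial analytic structure---specifically, a local Poincar\'e inequality on a subset of positive measure, in the spirit of results of Keith--Laakso, Mackay, and Carrasco Piaggio. In particular, $\Lambda(\Gamma)$ becomes a Lipschitz differentiability space. Next, modelling $\bdry X$ as $\G \cup \{\infty\}$, where $\G$ is the nilpotent factor in the Iwasawa decomposition of $\Isom(X)$, view $\Lambda(\Gamma)$ as a subset of the Carnot group $\G$ and apply the paper's Carnot-group extension of Cheeger's theorem. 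This yields, at almost every point, a rectifiably connected tangent isometric to a Carnot subgroup $H \leq \G$, whose homogeneous dimension must equal $Q$ by Ahlfors $Q$-regularity.

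It then remains to identify these subgroups and to globalize. The classification step aims to show that every Carnot subgroup of $\G$ arising as such a tangent must be (conjugate to) the nilpotent part of an isometrically embedded non-compact rank-one symmetric subspace $S \subset X$; for $X = \H_\C^n$, for example, these are the sub-Heisenberg groups and the Lagrangian flat subspaces, with analogous lists for $\H_\H^n$ and $\H_\O^2$. Finally, since $\Gamma$ acts on $\bdry X$ by uniformly quasi-M\"obius maps and minimally on $\Lambda(\Gamma)$, a single ``symmetric'' Carnot-subgroup tangent should force $\Lambda(\Gamma) = \partial S$ for an embedded rank-one symmetric subspace; equivariance of $\Lambda(\Gamma)$ then gives that $\Gamma$ stabilizes $S$, and convex-cocompactness furnishes the compact quotient.

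The principal obstacle lies in these last two steps. In the real hyperbolic case Bonk--Kleiner invoke Tukia-type rigidity for round spheres in $\Sp^n$, and an analogous rigidity theorem for boundaries of general rank-one symmetric subspaces inside the Carnot boundary $\G$ will need to be developed: one must both rule out Carnot subgroups $H \leq \G$ not arising from embedded symmetric subspaces, and show that the local tangent structure propagates globally under the quasi-M\"obius pseudogroup acting on $\Lambda(\Gamma)$. A secondary, technical difficulty is ensuring that the tangents to which the Carnot-group Cheeger theorem applies (the rectifiably connected ones) indeed occur at a set of positive measure in $\Lambda(\Gamma)$, which will require showing that the Poincar\'e-inequality subset inherited from the conformal-dimension attainment is itself rectifiably connected on small scales.
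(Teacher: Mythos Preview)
Your outline matches the paper's at the coarse level, but the two obstacles you flag are precisely where the paper's proof does real work, and it resolves them by ideas you have not anticipated.

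First, your ``secondary difficulty'' is a non-issue. You do not need Keith--Laakso or Carrasco Piaggio machinery to get a Poincar\'e inequality on a positive-measure subset: Bonk--Kleiner \cite[Theorem~1.3]{BKconf} shows directly that an Ahlfors $Q$-regular space in the conformal gauge of $\partial\Gamma$ that attains $\cdim_{\AR}$ with $Q>1$ is globally $Q$-Loewner, hence a PI space. Thus $\Lambda(\Gamma)$ is quasiconvex everywhere and all its tangents are rectifiably connected.

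Second, the globalization and classification steps are handled quite differently from what you sketch. The paper does not propagate local tangent information under the quasi-M\"obius pseudogroup; instead it uses a self-similarity trick. By \cite[Proposition~3.1]{BKqf}, every tangent of $(\Lambda(\Gamma),d_p)$ is \emph{isometric} to a parabolic limit set $(\Lambda(\Gamma)\setminus\{\omega\}, d_{\omega,q})$ for some $\omega\in\Lambda(\Gamma)$. Combining this with Le Donne's iterated-tangents theorem, once Theorem~\ref{tan} produces a Carnot subgroup $\N\subset\G$ as a tangent, one has $\N=\Lambda(\Gamma)\setminus\{\omega\}$ on the nose---no propagation is needed. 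For the classification, the paper does not enumerate Carnot subgroups of $\G$ and match them with symmetric subspaces. Instead it argues by contradiction via Cowling--Ottazzi \cite{CO15}: if $\N$ is not itself an Iwasawa group, then every conformal self-map of a connected open subset of $\N$ is affine. But each $g\in\Gamma$ acts conformally on $\partial X$ and preserves $\Lambda(\Gamma)=\N\cup\{\omega\}$, so $g|_{\N}$ would be affine and hence fix $\omega$, giving a global fixed point---impossible for a non-elementary group. Once $\N$ is Iwasawa, $\Lambda(\Gamma)$ is bi-Lipschitz to $\partial S$ for the corresponding symmetric space $S$, and the endgame (Lemma~\ref{bourdonlemma}) upgrades this to a M\"obius equivalence via Sullivan--Tukia/Chow/Pansu conjugation, then invokes Bourdon's extension theorem to produce the isometric copy of $S$ in $X$.
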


In the equality case, $\partial S$ and $\Lambda(\Gamma)$ are M\"obius equivalent, $Q = \Hdim(\partial S)$ is necessarily an integer, and the symmetric space $S$ is determined uniquely by $Q$ and the topological dimension of $\partial \Gamma$. Indeed, the non-compact rank-one symmetric spaces are uniquely determined by the Hausdorff and topological dimensions of their visual boundaries (cf. \cite[p. 34]{MTcdim}). Moreover, $\Gamma$ is a finite extension of a uniform lattice in $\Isom(S)$.

Our proof of Theorem \ref{mainthm} follows the basic outline that Bonk--Kleiner use to prove Theorem \ref{BKthm}. First, the inequality comes straight from the general theory of boundaries of hyperbolic groups. Appealing to the prior work of Bonk--Kleiner in \cite{BKqf} also dispenses with the equality case when $Q \leq 1$. When $Q > 1$ and equality holds, Bonk and Kleiner show that the limit set $\Lambda(\Gamma) \subset \Sp^n$ supports a Poincar\'e inequality (in the sense of \cite{HK98}). As $\Lambda(\Gamma)$ is isometrically embedded in $\R^{n+1}$, a theorem of Cheeger \cite{Ch99} guarantees that $\Lambda(\Gamma)$ has a tangent that is isometric to some Euclidean space $\R^k$. This implies that $\Lambda(\Gamma)$ is bi-Lipschitz equivalent to $\Sp^k$, and they can use \cite[Theorem 1.1]{BKqm} to conclude.

In the setting of Theorem \ref{mainthm}, when $Q >1$ and equality holds, the work of Bonk--Kleiner still implies that the limit set $\Lambda(\Gamma)$ supports a Poincar\'e inequality. However, it may not be embedded in any Euclidean space, so we cannot apply Cheeger's theorem. Instead, $\Lambda(\Gamma)$ is locally embedded in some Carnot group, namely, the Carnot group that models the local geometry of $\partial X$. The main work we do is to generalize Cheeger's result to spaces embedded in Carnot groups, showing that their tangents are isometric to Carnot subgroups. This is done in Theorem \ref{tan} below, which may be of independent interest. The remainder of the proof follows similar ideas as in the endgame of \cite[Theorem 1.2]{BKqm}, though we will need an additional input that identifies, among all Carnot groups, those that locally model boundaries of rank-one symmetric spaces. This is provided by recent work of Cowling and Ottazzi \cite{CO15}.

The structure of the paper is as follows. Section \ref{prelims} contains basic definitions and notation. Section \ref{symgromov} contains background material on boundaries of symmetric spaces and Gromov hyperbolic groups, as well as an analog of a result of Connell \cite{Con} that we prove as a short application of our methods (Proposition \ref{connell}). Section \ref{carnottangent} contains the proof of Theorem \ref{tan} about tangents of spaces embedded in Carnot groups, and describes some applications to bi-Lipschitz non-embedding. Finally, Section \ref{mainproof} contains the proof of Theorem \ref{mainthm}.

\section{Preliminaries}\label{prelims}
If $(X,d)$ is a metric space, we write $B(x,r)$ for the open ball of radius $r$ centered at $x\in X$ and $\overline{B}(x,r)$ for the closed ball. A metric space $(X,d)$ is said to be \ti{metrically doubling} if there is a constant $C$ such that every ball of radius $r >0 $ in $X$ can be covered by at most $C$ balls of radius $r/2$. 

A \ti{metric measure space} $(X,d,\mu)$ is a complete, separable metric space $(X,d)$ equipped with a Radon measure $\mu$. We say that the measure $\mu$ is \ti{doubling} if there is a constant $C$ such that
$$\mu(B(x,2r)) \leq C \mu(B(x,r))$$
for every ball $B(x,r)$ in $X$. This readily implies that $(X,d)$ is metrically doubling \cite{He01}.

For $Q>0$, we will often consider $Q$-dimensional Hausdorff measure $\mathcal{H}^Q$ on a metric space $(X,d)$. We write $\Hdim(X)$ for the Hausdorff dimension of $X$. A metric space is \ti{Ahlfors $Q$-regular}, for $Q>0$, if there is a constant $C>0$ such that
$$ C^{-1}r^Q \leq \mathcal{H}^Q(\overline{B}(x,r)) \leq Cr^Q $$
for all $x\in X$ and $0<r\leq \diam(X)$. In this case, $(X,d,\mathcal{H}^Q)$ is easily seen to be a doubling metric measure space. It is also uniformly perfect \cite[pp. 13-14]{MTcdim}.

The \ti{cross-ratio} of a four-tuple of distinct points $(x_1, x_2, x_3,x_4)$ in $X$ is the value
$$ [x_1, x_2, x_3, x_4] = \frac{d(x_1,x_3) d(x_2,x_4)}{d(x_1,x_4) d(x_2,x_3)}.$$
A homeomorphism $f\colon X\rightarrow Y$ between two metric spaces is \ti{M\"obius} if it preserves the cross-ratio, i.e., if
$$ [f(x_1), f(x_2), f(x_3), f(x_4)] = [x_1, x_2, x_3, x_4]$$
for every four-tuple  $(x_1,x_2,x_3,x_4)$ of distinct points in $X$.

More generally, given a homeomorphism $\eta\colon [0,\infty)\rightarrow[0,\infty)$, we say that a homeomorphism $f\colon X\rightarrow Y$ is \ti{$\eta$-quasi-M\"obius} (or simply \ti{quasi-M\"obius}) if
$$ [f(x_1), f(x_2), f(x_3), f(x_4)] \leq \eta([x_1, x_2, x_3, x_4])$$
for every four-tuple  $(x_1,x_2,x_3,x_4)$ of distinct points in $X$. Similarly, $f$ is said to be \ti{$\eta$-quasisymmetric} (or simply \ti{quasisymmetric}) if
$$ \frac{d(f(x_1),f(x_2))}{d(f(x_1),f(x_3))} \leq \eta\left(\frac{d(x_1,x_2)}{d(x_1,x_3)}\right)$$
for every triple  $(x_1,x_2,x_3)$ of distinct points in $X$.

These definitions also make sense if $f \colon X \rightarrow Y$ is assumed only to be injective, in which case it is necessarily a homeomorphism onto its image. Such a map $f$ is called a M\"obius embedding, quasi-M\"obius embedding, etc. It is not difficult to show that every $\eta$-quasisymmetric mapping is $\tilde{\eta}$-quasi-M\"obius, where $\tilde{\eta}$ depends only on $\eta$. If $X$ and $Y$ are bounded, then, conversely, every quasi-M\"obius homeomorphism is quasisymmetric \cite{Vai84}.

In the case that $X$ and $Y$ are bounded and $\eta$ is linear, each $\eta$-quasi-M\"obius map $f\colon X\rightarrow Y$ is actually bi-Lipschitz \cite[Remark 3.2]{Kin}. In particular, every M\"obius map is bi-Lipschitz.

The \ti{Ahlfors regular conformal dimension} of a doubling and uniformly perfect metric space $X$, denoted
$$\cdim_{\AR}(X),$$
is the infimal Hausdorff dimension among all Ahlfors regular metric spaces quasisymmetrically homeomorphic to $X$. The doubling and uniformly perfect properties guarantee that $\cdim_{\AR}(X)$ is finite \cite[Corollary 14.15]{He01}, and it is obviously quasisymmetrically invariant. However, the infimum is not always achieved, and it is a difficult problem to find general conditions under which it is. For more background on this quantity, we refer the reader to \cite{BKconf}, where it plays a crucial role.

\subsection{Tangents}
We now briefly describe the notion of a ``tangent'' of a metric space, which relies on the framework of pointed Gromov-Hausdorff convergence of spaces and mappings. For the necessary background on this framework, we refer the reader to any of the following: \cite[Section 5]{Ke04}, \cite[Section 3.2]{KM11}, \cite[Section 2]{GCD15}. 

A \ti{pointed metric space} $(X,d,x)$ is simply a metric space $(X,d)$ together with a fixed point $x\in X$. If the metric $d$ is clear, we sometimes suppress it and simply write $(X,x)$.

Suppose that $(X,d)$ is a doubling metric space. Fix $x \in X$ and let $\lambda_j \rightarrow \infty$ be a sequence of positive scaling factors. The sequence of pointed metric spaces $(X,\lambda_j d, x)$ then has a subsequence that converges in the pointed Gromov-Hausdorff sense to a complete pointed metric space $(\hat{X},\hat{x})$. We say that $(\hat{X},\hat{x})$ is a \ti{tangent} of $X$ at $x$. The set of pointed isometry classes of tangents at $x$ is denoted by $\Tan(X,x)$.

If $f \colon X \rightarrow Y$ is a Lipschitz map taking values in a complete, doubling metric space $(Y,\rho)$ with $y=f(x)$, then the sequence of Lipschitz maps
$$f \colon (X,\lambda_j d, x) \rightarrow (Y, \lambda_j \rho, y)$$
has a subsequence that converges to a Lipschitz map
$$\hat{f} \colon (\hat{X},\hat{x}) \rightarrow (\hat{Y},\hat{y}),$$
where $(\hat{Y}, \hat{y}) \in \Tan(Y,y)$. We will repeatedly use the ``hat" notation for tangents and tangent maps. Note that if we first fix $(\hat{X}, \hat{x}) \in \Tan(X,x)$, then by passing to further subsequences, we may find a corresponding $(\hat{Y}, \hat{y}) \in \Tan(Y,y)$ and tangent map $\hat{f} \colon \hat{X} \rightarrow \hat{Y}$. The set of tangents and mappings obtained in this way, up to isometric equivalence, is denoted by $\Tan(X,x,f)$. 

\begin{remark}
When $Y = \R^n$ is a finite-dimensional Euclidean space, we always identify $(\hat{Y},\hat{y})$ with $(\R^n,0)$ in the canonical way, so that $\hat{f}(x) = 0$. In this way, tangent maps behave like derivatives in the classical sense.
\end{remark}

\subsection{Lipschitz differentiability spaces}
In the seminal paper \cite{Ch99}, Cheeger introduced a type of differentiable structure for real-valued Lipschitz functions on metric measure spaces. He showed that a large class of metric measure spaces, the so-called ``PI spaces'' (those which are doubling and have a Poincar\'e inequality in the sense of \cite{HK98}), support such a structure.

The following definition is due to Cheeger \cite{Ch99}.

\begin{definition}\label{LDspace}
A metric measure space $(X,d,\mu)$ is called a \textit{Lipschitz differentiability space} if it satisfies the following condition. There are countably many Borel sets (``charts'') $U_i$ covering $X$, positive integers $n_i$ (the ``dimensions of the charts''), and Lipschitz maps $\phi_i\colon X\rightarrow\mathbb{R}^{n_i}$ with respect to which any Lipschitz function $f \colon X \rightarrow \R$ is differentiable almost everywhere, in the sense that for each $i$ and for $\mu$-almost every $x\in U_i$, there exists a unique $\grad f(x)\in\mathbb{R}^{n_i}$ such that
\begin{equation} \label{LD}
\lim_{y\rightarrow x} \frac{|f(y) - f(x) - \grad f(x)\cdot(\phi_i(y)-\phi_i(x))|}{d(x,y)} = 0.
\end{equation}
Here $\grad f(x)\cdot(\phi_i(y)-\phi_i(x))$ denotes the standard scalar product in $\mathbb{R}^{n_i}$.
\end{definition}
Note that the Borel measurability of the function $x\mapsto \grad f(x)$ and the set of differentiability points of $f$ are consequences of this definition; see \cite[Remark 1.2]{BS13}. In recent years, the study of Lipschitz differentiability spaces in their own right has become an active area of research, and we refer the reader to \cites{Ch99, Ke04, CK09, KM11, Ba15, GCD15, BL15, BL16, CKS15} for more background.

If $(X,d)$ is doubling, it is natural to rephrase \eqref{LD} in terms of tangents. Letting $df(x) \in \Hom(\R^n,\R)$ be the dual functional to $\grad f(x)$, the asymptotic in (\ref{LD}) is equivalent to the following condition. If $(\hat{X},\hat{x}) \in \Tan(X,x)$ and we have associated tangent mappings $\hat{f} \colon \hat{X} \rightarrow \R$ and $\hat{\phi}\colon \hat{X}\rightarrow\R^n$ to $f$ and $\phi$, respectively, then $\hat{f}$ factors through $\R^n$ via $\hat{f} = df(x) \circ \hat{\phi}$.

A similar factorization works for any Lipschitz map $f \colon X \rightarrow \R^N$ into a Euclidean space, simply by considering the coordinate functions for $f$. Namely, if $(U,\phi)$ is an $n$-dimensional chart in $X$, then for almost every $x \in U$, there is a unique linear map $Df(x) \in \Hom(\R^n,\R^N)$ such that for every $(\hat{X}, \hat{x}) \in \Tan(X,x)$, any tangent map $\hat{f} \colon \hat{X} \rightarrow \R^N$ factors through $\R^n$ via
$$\hat{f} = Df(x) \circ \hat{\phi}.$$
Not surprisingly, the rows of the matrix for $Df(x)$ are simply the gradients $\grad f_i(x)$ of the coordinate functions $f = (f_1,\ldots,f_N)$.

Recently, there has been much interest in understanding the geometric structure of tangents of Lipschitz differentiability spaces. One result in this vein is due to Cheeger--Kleiner--Schioppa \cite{CKS15}, and it will be important for us below. First, another piece of terminology.

A Lipschitz map $f \colon X \rightarrow Y$ between two metric spaces is called a \ti{Lipschitz quotient} if it is Lipschitz and there exists $c >0$ such that
$$B(f(x),cr) \subset f(B(x,r))$$
for all $x \in X$ and $r >0$. The supremum of such constants $c$ is called the \ti{co-Lipschitz} constant of $f$. Note that a Lipschitz quotient map is clearly surjective.

We say that $f$ is a \ti{metric submersion} if it is both $1$-Lipschitz and has co-Lipschitz constant $c=1$.

\begin{prop}[\cite{CKS15}, Theorem 1.12] \label{LQ}
Let $(X,d)$ be a complete, doubling metric space, and let $\mu$ be a Radon measure on $X$ such that $(X,d,\mu)$ is a Lipschitz differentiability space. Let $(U,\phi)$ be an $n$-dimensional chart in $X$.

Then for $\mu$-almost every $x \in U$, there is a norm $\|\cdot\|_x$ on $\R^n$ with the following property. For every $(\hat{X}, \hat{x}) \in \Tan(X,x)$ with tangent map $\hat{\phi} \colon \hat{X} \rightarrow \R^n$, the mapping $\hat{\phi}$ is a metric submersion onto $(\R^n, \|\cdot\|_x)$.

In particular, the mapping $\hat{\phi}$ is a Lipschitz quotient onto $\R^n$ with its standard Euclidean metric.
\end{prop}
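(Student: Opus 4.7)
The plan is to identify the correct norm $\|\cdot\|_x$ via a duality argument, and then verify separately that $\hat\phi$ is both $1$-Lipschitz and co-Lipschitz with constant $1$ with respect to this norm; the second part is the main obstacle.

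First, I would define the norm at $\mu$-a.e. $x \in U$ by duality with gradients of Lipschitz functions. Let $\Lambda_x \subset (\R^n)^*$ be the closed convex symmetric hull of the set of differentials $\{df(x) : f \colon X \to \R, \; \Lip(f) \leq 1\}$, and let $\|\cdot\|_x^*$ be the gauge of $\Lambda_x$ on $(\R^n)^*$, with $\|\cdot\|_x$ its dual on $\R^n$. One needs to check that $\Lambda_x$ is a bounded convex body with nonempty interior. Boundedness follows because $df(x)$ is controlled by $\Lip(f)/\liminf_{y\to x}\tfrac{|\phi(y)-\phi(x)|}{d(x,y)}$, while the existence of $n$ linearly independent elements of $\Lambda_x$ can be extracted from the coordinate functions of $\phi$ itself, which have differentials equal to the standard basis of $(\R^n)^*$ at almost every point of the chart.

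For the $1$-Lipschitz bound, fix $\hat y_1, \hat y_2 \in \hat X$ and set $v = \hat\phi(\hat y_1) - \hat\phi(\hat y_2)$. By construction of $\|\cdot\|_x$ there exists a $1$-Lipschitz $f \colon X \to \R$ with $df(x) \cdot v = \|v\|_x$. The factorization property of the differentiable structure gives a tangent map $\hat f = df(x) \circ \hat\phi \colon \hat X \to \R$, and since passing to a tangent preserves the Lipschitz constant, $\hat f$ is itself $1$-Lipschitz. Therefore
\[
\|v\|_x = df(x) \cdot v = \hat f(\hat y_1) - \hat f(\hat y_2) \leq d_{\hat X}(\hat y_1, \hat y_2),
\]
which is exactly the $1$-Lipschitz bound on $\hat\phi$.

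The hard step is showing that $\hat\phi$ is co-Lipschitz with constant $1$, i.e.\ that the image actually fills $(\R^n, \|\cdot\|_x)$ with no metric slack. The natural tool is the existence theory of Alberti representations for Lipschitz differentiability spaces, due to Bate (building on Alberti–Cs\"ornyei–Preiss). Such a space admits $n$ Alberti representations of $\mu$ whose ``speed'' directions, measured through $\phi$, span $\R^n$ in a quantitatively independent way at a.e.\ $x$. Geometrically, for each $v \in \R^n$ these representations supply, at arbitrarily small scales near $x$, Lipschitz curve fragments along which $\phi$ moves in the direction $v$ with tightly controlled speed. Passing to a subsequential limit at $(\hat X,\hat x)$, I would extract a unit-speed Lipschitz curve $\gamma_v \colon [0,\|v\|_x] \to \hat X$ with $\gamma_v(0)=\hat x$ and $\hat\phi(\gamma_v(t)) = \hat\phi(\hat x) + tv/\|v\|_x$, and this gives both surjectivity of $\hat\phi$ onto $(\R^n,\|\cdot\|_x)$ and the matching co-Lipschitz estimate. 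The main obstacles are (i) invoking Bate's structure theorem in a form precise enough to produce these curves with quantitative direction control, (ii) passing to tangents while preserving directionality and speed, and (iii) choosing a single $\mu$-full measure set of $x$ for which the conclusion holds simultaneously for every tangent in $\Tan(X,x)$, which uses compactness of the space of pointed tangents together with uniform control on the estimates.
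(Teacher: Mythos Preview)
The paper does not prove this proposition; it is quoted from \cite[Theorem 1.12]{CKS15}, with the weaker Lipschitz quotient statement attributed to earlier work of Schioppa and of the first-named author. So there is no in-paper proof to compare against.

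Your outline is broadly the approach taken in that literature: define $\|\cdot\|_x$ by duality with differentials of $1$-Lipschitz functions, get the $1$-Lipschitz bound from the factorization $\hat f = df(x)\circ\hat\phi$, and get the co-Lipschitz bound by pushing Bate's Alberti representations to the tangent. Two places need more care. In the $1$-Lipschitz step you assert the existence of a $1$-Lipschitz $f$ with $df(x)\cdot v = \|v\|_x$, but since $\Lambda_x$ is only the \emph{closed convex hull} of such differentials the supremum need not be attained; you only get $df(x)\cdot v \geq \|v\|_x - \epsilon$, which still suffices after letting $\epsilon\to 0$. The substantive gap is in the co-Lipschitz step: producing a \emph{unit-speed} curve in $\hat X$ (speed exactly $1$ relative to $\|\cdot\|_x$, not merely bounded below) requires that the Alberti representations can be taken with $\phi$-speed arbitrarily close to the optimal value in every prescribed direction, which is the ``universal'' refinement in Bate's work and the genuine technical heart of the CKS argument. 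You flag this as obstacle (i) but do not indicate how to resolve it; likewise, passing curve \emph{fragments} to full curves in the tangent (your obstacle (ii)) needs a density/porosity argument ensuring fragments through the basepoint persist at all scales, which is nontrivial. As a sketch of the strategy this is accurate, but it is not yet a proof.
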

The statement about Lipschitz quotients in Proposition \ref{LQ} already appeared in earlier (independent) work of Schioppa (Theorem 5.56 in \cites{Sc13, Sc16}) and the first-named author \cite[Corollary 5.1]{GCD15}. This statement alone is enough for the application to our main result, Theorem \ref{mainthm}. We state the stronger version about metric submersions only for the applications in Section \ref{consec} below.
	
As Cheeger observed, a consequence of the surjectivity of $\hat{\phi}$ provided by Proposition \ref{LQ} is the following result.

\begin{corollary} \label{gcd}
Let $(U,\phi)$ be an $n$-dimensional chart in a complete, metrically doubling Lipschitz differentiability space $(X,d,\mu)$, and suppose that $F \colon X \rightarrow \R^N$ is a bi-Lipschitz embedding. Then for $\mu$-almost every $x \in U$, the set $\Tan(F(X), F(x))$ consists of one element, canonically isometric to a fixed $n$-dimensional linear subspace of $\R^N$.
\end{corollary}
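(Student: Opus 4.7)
The plan is to couple the factorization of tangent maps supplied by the Lipschitz differentiability structure with the surjectivity statement of Proposition \ref{LQ}. First, I would fix a full-measure set of $x \in U$ on which two things happen simultaneously: (a) each coordinate function of $F$ is differentiable along the chart $\phi$, so there is a linear map $DF(x) \in \Hom(\R^n, \R^N)$ with $\hat{F} = DF(x) \circ \hat{\phi}$ for every pair of corresponding tangent maps; and (b) every tangent map $\hat{\phi} \colon \hat{X} \to \R^n$ is surjective (in fact a metric submersion, but surjectivity is all that is needed here). Fix such a good point $x$.

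Next, I identify tangents of $F(X)$ at $F(x)$ with subsets of $\R^N$: since $(\R^N, \lambda_j |\cdot|, F(x))$ is canonically isometric to $(\R^N, |\cdot|, 0)$, any subsequential pointed Gromov-Hausdorff limit $\hat{Y}$ of the rescaled subsets $(F(X), \lambda_j |\cdot|, F(x))$ sits naturally in $\R^N$ with the Euclidean metric and contains $0$. Given any sequence $\lambda_j \to \infty$, I use the metric doubling property together with a standard diagonal extraction to pass to a subsequence along which, simultaneously, $(X, \lambda_j d, x) \to (\hat{X}, \hat{x})$, the tangent maps $\hat{\phi}$ and $\hat{F}$ exist, and $\hat{F}(\hat{X}) = \hat{Y}$. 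Because $F$ is bi-Lipschitz, so is $\hat{F}$ onto $\hat{Y}$ with the inherited constants.

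Combining the inputs, $\hat{Y} = \hat{F}(\hat{X}) = DF(x)(\hat{\phi}(\hat{X})) = DF(x)(\R^n)$, where the last equality uses surjectivity of $\hat{\phi}$ from Proposition \ref{LQ}. To finish I check that $DF(x)$ has rank $n$: if some nonzero $v \in \R^n$ satisfied $DF(x)v = 0$, surjectivity of $\hat{\phi}$ would give $p \in \hat{X}$ with $\hat{\phi}(p) = v$, and then $\hat{F}(p) = 0 = \hat{F}(\hat{x})$ would contradict the bi-Lipschitzness of $\hat{F}$ (since $p \neq \hat{x}$ because $\hat{\phi}(\hat{x}) = 0 \neq v$). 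Thus $V := DF(x)(\R^n)$ is a fixed $n$-dimensional linear subspace of $\R^N$ that does not depend on the sequence $\lambda_j$ or the choice of tangent $\hat{X}$, so every element of $\Tan(F(X), F(x))$ is canonically isometric to $V$.

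The step that needs most care is the simultaneous extraction of tangents of $X$ and of $F(X)$ along a single subsequence; without it the factorization $\hat{F} = DF(x)\circ \hat{\phi}$ cannot be invoked to identify $\hat{Y}$. This is a routine consequence of metric doubling applied to both $X$ and $F(X) \subset \R^N$, but must be stated explicitly. Once this extraction is in hand, the identification $\hat{Y} = DF(x)(\R^n)$ and the short rank-$n$ argument drive the rest, exactly mirroring Cheeger's original observation in the Euclidean-embedded PI setting.
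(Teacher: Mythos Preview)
Your proposal is correct and follows essentially the same approach as the paper's proof: fix a full-measure set where both Proposition \ref{LQ} and differentiability of $F$ hold, start from an arbitrary tangent $\hat{Y}$ of $F(X)$ at $F(x)$, extract a compatible tangent $(\hat{X},\hat{x})$ with tangent maps $\hat{\phi}$ and $\hat{F}$, use the factorization $\hat{F} = DF(x)\circ\hat{\phi}$ together with surjectivity of $\hat{\phi}$ and bi-Lipschitzness of $\hat{F}$ to conclude $\hat{Y} = DF(x)(\R^n)$ with $DF(x)$ injective. Your injectivity argument via a specific preimage $p$ of $v$ is a slightly more explicit version of the paper's one-line ``since $\hat{F}$ is bi-Lipschitz, $DF(x)$ must be injective,'' and your explicit mention of the simultaneous subsequence extraction is exactly the step the paper handles with ``by passing to subsequences.''
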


For a set $E\subset\R^N$ and a point $x\in E$, to say that each element of $\Tan(E,x)$ is \ti{canonically isometric} to $\hat{E}\subset\R^N$ means that, whenever $\lambda_j\rightarrow\infty$ and
$$ (\lambda_j(E-x), 0)$$
converges in the pointed Hausdorff sense in $\R^N$, the limit is $(\hat{E},0)$. This is stronger than saying that $\Tan(E,x)$ consists only of one element, and hence Cheeger \cite[Section 14]{Ch99} uses the phrase ``unique in the strong sense'' for this.

\begin{proof}[Proof of Corollary \ref{gcd}]
We give a brief sketch of the proof, whose details can be found in Theorem 14.1 of \cite{Ch99} or Corollary 8.1 of \cite{GCD15}.

Let $Y=F(X)\subset\R^N$. Consider a point $x\in X$ at which Proposition \ref{LQ} applies, and at which $F$ is differentiable with derivative $DF(x)\colon \R^n\rightarrow\R^N$. Let $y = F(x)$ and consider any tangent $(\hat{Y},\hat{y})\in \Tan(Y,y)$, which we can view as a pointed Hausdorff limit of rescalings of $Y-y$ in $\R^N$. By passing to subsequences, we may obtain an associated tangent $(\hat{X},\hat{x})\in \Tan(X,x)$ along with associated tangent mappings $\hat{\phi} \colon \hat{X}\rightarrow\R^n$ and $\hat{F} \colon \hat{X}\rightarrow \hat{Y}\subset\R^N$. Note that $\hat{F}$ is a bi-Lipschitz homeomorphism onto $\hat{Y}$. In addition, $\hat{\phi}$ is surjective by Proposition \ref{LQ}.

Differentiability implies that $\hat{F}$ factors as $DF(x) \circ \hat{\phi}$. Since $\hat{F}$ is bi-Lipschitz, $DF(x)$ must be injective. It follows from this and the surjectivity of $\hat{\phi}$ that 
$$\hat{Y} = F(\hat{X}) = DF(x)\circ\hat{\phi}(\hat{X}) = DF(x)(\R^n),$$ 
which is a fixed $n$-dimensional linear subspace of $\R^N$.
\end{proof}

Corollary \ref{gcd} is precisely the statement used in the Bonk--Kleiner proof of Theorem \ref{BKthm}. As discussed above, it is this result that we must generalize to Carnot group targets.

\subsection{Carnot groups}

Here we give some brief background on Carnot groups. For more, we refer the reader to \cite{Mo} or \cite[Section 2]{CC06}.

A Carnot group is a simply connected nilpotent Lie group $\G$ whose Lie algebra $\mathfrak{g}$ admits a stratification
$$\mathfrak{g} = V_1 \oplus \cdots \oplus V_s,$$
where the first layer $V_1$ generates the rest via $V_{i+1} = [V_1,V_i]$ for all $1 \leq i \leq s$, and we set $V_{s+1} = \{0\}$. The exponential map $\exp \colon \mathfrak{g} \rightarrow \G$ is a diffeomorphism, so choosing a basis for $\mathfrak{g}$ gives exponential coordinates for $\G$. Equipped with any left-invariant Riemannian metric, $\G$ is complete and the Lie exponential map coincides with the Riemannian exponential map. For each $x \in \G$, we will use $L_x \colon \G \rightarrow \G$ to denote the left multiplication map $y \mapsto xy$.

A natural family of automorphisms of $\G$ are the dilations $\delta_{\lambda} \colon \G \rightarrow \G$, for $\lambda >0$. On the Lie algebra level, these are defined by 
\begin{equation}\label{dilation}
v \mapsto \lambda^i v, \hspace{0.3cm} \text{for } v \in V_i,
\end{equation}
and one can see that this gives a Lie algebra isomorphism. Conjugating this back to $\G$ by the exponential map gives $\delta_{\lambda}$. 

On any Carnot group $\G$, there are metrics that interact nicely with the translations and dilations, in the sense that each $L_x$ is an isometry and $\delta_{\lambda}$ scales distances by the factor $\lambda$.

\begin{definition}\label{homogdistance}
A metric $d \colon \G \times \G\rightarrow \R_{\geq 0}$ is called a \textit{homogeneous distance} if it induces the manifold topology of $\G$, it is left-invariant, i.e.
$$ d(xy,xz) = d(y,z) \text{ for all } x,y,z \in \G,$$
and it is $1$-homogeneous with respect to the dilations $\delta_\lambda$ defined above:
$$ d(\delta_\lambda(x), \delta_\lambda(y)) = \lambda d(x,y) \text{ for all } \lambda>0 \text{ and } x,y\in\G.$$
\end{definition}

For example, given an inner product on the horizontal layer $V_1$, the associated (sub-Riemannian) Carnot--Caratheodory metric $d_{cc}$ is a homogeneous distance.

\begin{remark}\label{homogbilip}
It is a simple fact that, for any two homogeneous distances $d_1, d_2$ on $\G$, the identity map $(\G,d_1) \rightarrow (\G, d_2)$ is a bi-Lipschitz homeomorphism. In particular, a curve is rectifiable in $(\G,d_1)$ if and only if it is rectifiable in $(\G,d_2)$.
\end{remark}

Let $d$ be a homogeneous distance on $\G$, and let $\mu$ be the associated Hausdorff measure, so that $(\G, d, \mu)$ is a metric measure space. The left-invariance and 1-homogeneity of $d$ easily imply that $(\G, d, \mu)$ is Ahlfors regular. Another consequence is that every tangent to any point in $(\G, d)$ is isometric to $(\G, d, 0)$ itself, much like tangents of finite-dimensional Euclidean spaces. As we did for those, we will identify any $(\hat{\G},\hat{x}) \in \Tan(\G,x)$ with $(\G,0)$. Similarly, if $E$ is a closed subset of a Carnot group $\G$, then any tangent of $E$ can be viewed as a pointed Hausdorff limit of pointed rescalings of $E$ in $\G$ and identified with a pointed closed subset $(\hat{E},0)$ of $\G$. The terminology, used in $\R^n$ in Corollary \ref{gcd}, that tangents can be ``canonically isometric'' to subsets of $\G$, therefore also makes sense in Carnot groups. Namely, we say that each element of $\Tan(E,x)$ is canonically isometric to $\hat{E}\subset\G$ if, whenever $\lambda_j\rightarrow\infty$ and
$$ (\delta_{\lambda_j}(L_{-x}(E)), 0)$$
converges in the pointed Hausdorff sense, the limit is $(\hat{E},0)$. 

For the remainder of this section, we equip $\G$ with a fixed homogeneous distance $d$ and its corresponding Hausdorff measure. We can then talk about Lipschitz functions on $\G$ or Lipschitz mappings between Carnot groups. Note that the collection of such maps does not depend on the chosen homogeneous distance.

Lipschitz mappings between Carnot groups admit a form of differentiation more robust than that discussed in the previous subsection, one which takes into account the group structure.

\begin{theorem}[Pansu \cite{Pa89}]
Let $f \colon \G_1 \rightarrow \G_2$ be a Lipschitz map between Carnot groups. Then for almost every $x \in \G_1$, the sequence of maps
$$\delta_{\lambda} \circ (L_{f(x)^{-1}} \circ f \circ L_x) \circ \delta_{\lambda^{-1}}$$
converges uniformly on compact sets, as $\lambda \rightarrow \infty$, to a Lie group homomorphism $Df(x) \colon \G_1 \rightarrow \G_2$ that commutes with the dilations.
\end{theorem}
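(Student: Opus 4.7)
The plan is to prove Pansu's theorem in three steps: first extract subsequential limits of the difference quotients by compactness, then identify these limits on horizontal $1$-parameter subgroups, and finally upgrade the resulting horizontal derivative to a Lie group homomorphism.

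For the first step, define $f_{x,\lambda}(y) = \delta_\lambda(L_{f(x)^{-1}}(f(L_x(\delta_{\lambda^{-1}}(y)))))$. Left-invariance of the homogeneous distances on $\G_1$ and $\G_2$ together with the $1$-homogeneity of $\delta_\lambda$ ensure each $f_{x,\lambda}$ is Lipschitz with the same constant as $f$ and sends $0$ to $0$. The Arzel\`a--Ascoli theorem then produces, for every sequence $\lambda_j \to \infty$, a subsequence along which $f_{x,\lambda_j}$ converges uniformly on compact sets to some Lipschitz map $g \colon \G_1 \to \G_2$. An elementary manipulation shows any such limit commutes with dilations, so the theorem reduces to showing (i) the limit is unique for almost every $x$, and (ii) each such limit is a Lie group homomorphism.

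For the second step, I would fix a nonzero $X$ in the first layer $V_1$ of the Lie algebra of $\G_1$. The curve $t \mapsto \exp(tX)$ is parametrized at constant speed for the Carnot-Carath\'eodory metric, hence for any homogeneous distance by Remark \ref{homogbilip}. Thus for each $x$ the map $t \mapsto f(x \exp(tX))$ is a Lipschitz curve in $\G_2$, and any Lipschitz curve into a Carnot group is necessarily horizontal; a Lebesgue differentiation argument applied to its horizontal velocity shows it admits a horizontal tangent (living in $V_1$ of $\G_2$) for almost every $t$. A Fubini argument on cosets of the $1$-parameter subgroup $\exp(\R X)$, iterated over a basis of $V_1$ of $\G_1$, then produces for almost every $x \in \G_1$ a linear map $\phi_x$ from $V_1$ of $\G_1$ to $V_1$ of $\G_2$ recording the horizontal derivative of $f$ at $x$ in every horizontal direction.

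For the third step, one must extend $\phi_x$ to a Lie algebra homomorphism; equivalently, verify the bracket compatibility $\phi_x([X,Y]) = [\phi_x(X),\phi_x(Y)]$ for $X,Y \in V_1$. This is the main obstacle and the heart of Pansu's original argument. It is established by analyzing $f$ along small commutator loops $\exp(tX)\exp(tY)\exp(-tX)\exp(-tY)$, which by Baker--Campbell--Hausdorff close up to an element of order $t^2$ in the second layer $V_2$; the leading-order behavior of $f$ along such loops must match the bracket in $\G_2$, and a density argument at Lebesgue density points of the basepoints where $\phi_x$ exists yields the identity on a full-measure set. Bracket generation then extends $\phi_x$ uniquely to a graded Lie algebra homomorphism, which via the exponential map gives a Lie group homomorphism $Df(x) \colon \G_1 \to \G_2$ commuting with dilations. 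Finally, once $Df(x)$ is identified, standard arguments integrating $\phi_x$ along horizontal paths approximating general elements of $\G_1$ show $f_{x,\lambda} \to Df(x)$ along the full family $\lambda\to\infty$, not just along subsequences.
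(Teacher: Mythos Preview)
The paper does not prove this statement. Pansu's theorem is quoted as a black-box result from \cite{Pa89} and used immediately afterward; there is no proof in the paper to compare your proposal against. So there is nothing for me to match your argument to here.

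That said, since you have written a sketch, let me flag two soft spots. First, your claim in Step~1 that ``an elementary manipulation shows any such limit commutes with dilations'' is premature: from the identity $f_{x,\mu\lambda} = \delta_\mu \circ f_{x,\lambda} \circ \delta_{\mu^{-1}}$ you only get that the \emph{set} of subsequential limits is invariant under conjugation by dilations, not that any individual limit is. Dilation-equivariance of the limit follows only after uniqueness is established, so it belongs at the end of the argument, not the beginning. Second, in Step~2 you assert that the directional derivatives assemble into a \emph{linear} map $\phi_x$ on $V_1$, but linearity in the horizontal direction is itself a nontrivial step in Pansu's proof (additivity in $X$ is not automatic from Fubini on each coordinate axis separately) and typically requires an argument at Lebesgue points of the directional-derivative field. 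Your Step~3 outline of the bracket compatibility via commutator loops is the right idea and is indeed the heart of the matter, though as you acknowledge it is where most of the work lies.
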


The following is an immediate consequence of Pansu's theorem, in the case that $\G_2 = \R$. Let $n = \dim(V_1)$ be the vector space dimension of the horizontal layer of $\mathfrak{g}$. There is a natural ``horizontal projection''
$$\pi \colon \G \rightarrow V_1 \simeq \R^n$$
obtained by composing $\exp^{-1}$ with the vector space projection $P \colon \mathfrak{g} \rightarrow V_1$. If $f \colon \G \rightarrow \R$ is a Lipschitz function, then for almost every $x \in \G$, the mapping $Df(x)$ factors as $Df(x) = A \circ \pi$, where $A \colon V_1 \rightarrow \R$ is linear.

The following lemma summarizes the additional basic properties of $\pi$ that we will need below.

\begin{lemma}\label{pifacts}
Let $\G$ be a Carnot group whose horizontal layer $V_1$ has dimension $n$, and let $\pi \colon \G \rightarrow V_1 \simeq \R^n$ be the associated horizontal projection.
\begin{enumerate}[(i)]
\item\label{piLD} $\G$ is a Lipschitz differentiability space with $n$-dimensional chart $(\G, \pi)$.
\item\label{pihomomorphism} $\pi$ is a group homomorphism.
\item\label{pidilation} $\pi$ commutes with dilations: $\pi(\delta_{\lambda}(x)) = \lambda \pi(x)$.
\item\label{pitangent} For all $x\in \G$, every element of $\Tan(\G, x, \pi)$ is isometric to $(\G, 0, \pi)$.
\item\label{piLQ} $\pi$ is a Lipschitz quotient map onto $V_1 \simeq \R^n$.
\item\label{picurve} If $\gamma \colon [0,1]\rightarrow \G$ is a non-constant Lipschitz curve, then $\pi \circ \gamma$ is non-constant.
\item\label{pilift} If $\gamma \colon [0,1]\rightarrow V_1$ is a Lipschitz curve and $x \in \pi^{-1}(\gamma(0))$, then there is a \textit{unique} Lipschitz curve $\tilde{\gamma}\colon [0,1]\rightarrow \G$ such that $\pi(\tilde{\gamma}(t)) = \gamma(t)$ and $\tilde{\gamma}(0) = x$. 
\end{enumerate}
\end{lemma}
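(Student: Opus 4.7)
The plan is to establish the seven items in turn, using the Baker--Campbell--Hausdorff (BCH) formula, Pansu's differentiation theorem, and the standard horizontal-lift ODE.

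The algebraic items (\ref{pihomomorphism}) and (\ref{pidilation}) come straight from the Lie algebra structure. For (\ref{pihomomorphism}), BCH gives $\log(xy) = \log(x) + \log(y) + \tfrac12[\log x, \log y] + \cdots$; every bracket lies in $[\mathfrak g, \mathfrak g] = V_2 \oplus \cdots \oplus V_s$, so $P$ kills it and $\pi(xy) = \pi(x) + \pi(y)$. For (\ref{pidilation}), the Lie-algebra dilation (\ref{dilation}) scales $V_i$ by $\lambda^i$, so $P \circ \delta_\lambda = \lambda P$, i.e.\ $\pi \circ \delta_\lambda = \lambda \pi$. Part (\ref{piLQ}) follows by noting that $\pi(\exp v) = v$ for $v \in V_1$, and that the restriction of any homogeneous distance to $\exp(V_1)$ is a $1$-homogeneous continuous norm, hence equivalent to the Euclidean norm; this gives $c > 0$ with $\{v \in V_1 : \|v\| \leq c\} \subset \pi(B(0,1))$, and (\ref{pihomomorphism})--(\ref{pidilation}) propagate this to $B_{V_1}(\pi(x), cr) \subset \pi(B(x,r))$ for every ball. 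Then (\ref{pitangent}) is a direct computation: the rescaled map $y \mapsto \lambda(\pi(L_x \delta_{\lambda^{-1}} y) - \pi(x))$ equals $\pi(y)$ identically by (\ref{pihomomorphism}) and (\ref{pidilation}), so $\pi$ is its own tangent map.

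Item (\ref{piLD}) is the main step, and I expect it to be the real obstacle. Given a Lipschitz $f : \G \to \R$, Pansu's theorem provides, for a.e.\ $x$, a Pansu derivative $Df(x) : \G \to \R$ that is a dilation-equivariant group homomorphism; the discussion preceding the lemma already shows $Df(x) = A \circ \pi$ for a unique linear $A : V_1 \to \R$. To convert this into the pointwise metric approximation demanded by Definition \ref{LDspace}, for $z$ near $x$ I would set $\lambda = 1/d(x,z)$ and $y = \delta_\lambda(x^{-1}z)$, so that $d(0, y) = 1$ and, by (\ref{pihomomorphism})--(\ref{pidilation}), $\pi(z) - \pi(x) = \lambda^{-1} \pi(y)$. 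Uniform convergence of Pansu's rescalings on the closed unit ball then gives
$$|f(z) - f(x) - A(\pi(z) - \pi(x))| = o(d(x,z)),$$
which is exactly (\ref{LD}) with $\grad f(x)$ the vector dual to $A$, and uniqueness of $\grad f(x)$ follows from (\ref{piLQ}). The whole difficulty lies in translating Pansu's Carnot-equivariant limit into a first-order metric approximation in the $\pi$-coordinate; the identities (\ref{pihomomorphism})--(\ref{pidilation}) together with the scaling $\lambda = 1/d(x,z)$ are precisely what makes the translation work.

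For (\ref{picurve}) and (\ref{pilift}), I would use the standard fact that every rectifiable curve in a Carnot group (with respect to any homogeneous distance, by Remark \ref{homogbilip}) is horizontal: at a.e.\ $t$, $\dot\gamma(t) = (L_{\gamma(t)})_* v(t)$ with $v(t) \in V_1$. The differential $d\pi_{\gamma(t)}$, pulled back to $\mathfrak g$ via left-translation, equals $P$, which is the identity on $V_1$, so $\tfrac{d}{dt}(\pi \circ \gamma)(t) = v(t)$ a.e. For (\ref{picurve}), if $\pi \circ \gamma$ is constant then $v \equiv 0$ a.e., forcing $\gamma$ to be constant. For (\ref{pilift}), existence and uniqueness come from solving the horizontal-lift ODE $\dot{\tilde\gamma}(t) = (L_{\tilde\gamma(t)})_* \dot\gamma(t)$ with $\tilde\gamma(0) = x$, and the identity $\pi \circ \tilde\gamma = \gamma$ follows by matching initial values together with the same velocity identity.
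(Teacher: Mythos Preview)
Your proposal is correct; the arguments for (\ref{pihomomorphism}), (\ref{pidilation}), (\ref{pitangent}), (\ref{piLQ}), (\ref{picurve}), and (\ref{pilift}) match the paper's in substance (the paper phrases (\ref{picurve}) as ``immediate from the definition of $d_{cc}$'' and for (\ref{pilift}) simply cites a lifting proposition from the literature, but your horizontal-velocity and Carath\'eodory-ODE arguments are exactly what underlies those references).

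The one genuine methodological difference is in (\ref{piLD}). The paper proves that $\G$ is a Lipschitz differentiability space by invoking two heavy external results: Carnot groups are PI spaces (Jerison; Heinonen--Koskela), and PI spaces are Lipschitz differentiability spaces (Cheeger). It then uses Pansu's theorem only to identify $\pi$ as the chart map. You instead bypass the PI/Cheeger machinery entirely and verify the differentiability condition \eqref{LD} directly from Pansu's theorem via the rescaling $\lambda = 1/d(x,z)$, $y = \delta_\lambda(x^{-1}z)$. Your route is more self-contained: since Pansu's theorem is already stated in the paper and is needed anyway to pin down the chart, your argument extracts (\ref{piLD}) from it with no further imports. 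The paper's route, on the other hand, situates the result in the broader PI framework that is thematically relevant elsewhere in the paper. Both are valid; yours is the more economical proof of this particular lemma.
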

\begin{proof}

That $\G$ is a Lipschitz differentiability space follows from the fact that all Carnot groups are PI spaces (\cite{Je}, \cite{HK98}) as well as Cheeger's theorem that all PI spaces are Lipschitz differentiability spaces \cite{Ch99}. That $\pi \colon \G\rightarrow\R^n$ serves as a global chart can be deduced from Pansu's theorem and the fact that for every group homomorphism $L\colon \G\rightarrow \R$ that commutes with dilations, there is a linear map $A \colon V_1 \rightarrow \R$ such that $L = A \circ \pi$. Here we use that $\pi$ is, in fact, a Lipschitz map.

The Baker--Campbell--Hausdorff formula shows that $\pi$ is a group homomorphism. Indeed,
$$\pi(xy) = P(\exp^{-1}(xy)) = P(\exp^{-1}(x) + \exp^{-1}(y) + v),$$
where $v \in [\mathfrak{g},\mathfrak{g}] = V_2 \oplus \cdots \oplus V_s$. Thus,
$$\pi(xy) = P(\exp^{-1}(x)) + P(\exp^{-1}(y)) = \pi(x) + \pi(y).$$

That $\pi$ commutes with the dilations on $\G$ and $V_1$ follows directly from the fact that dilations act on $V_1$ by simple scalings, as in \eqref{dilation}. Part \eqref{pitangent} then follows immediately from parts \eqref{pihomomorphism} and \eqref{pidilation}.

Part \eqref{piLQ} can be seen as follows. First of all, by Remark \ref{homogbilip}, it suffices to assume that $d=d_{cc}$. By translation and dilation invariance, it further suffices to show that $\pi(B(0,1))$ contains an open ball around $0$ in $V_1\simeq \R^n$. Each element $v_1 \in B_{V_1}(0,1)\subseteq V_1$ gives rise to an element $x \in B(0,1)$ by exponentiating $v_1 \oplus 0 \oplus ... \oplus 0$. Since $\pi(x)=v_1$, we see that $\pi(B(0,1)) \supseteq B_{V_1}(0,1)$.

Part \eqref{picurve} is immediate from the definition of the Carnot--Carath\'eodory metric and Remark \ref{homogbilip}.

Finally, Property \eqref{pilift} follows from \cite[Proposition 2.3]{TY13}, using that $\gamma$ is absolutely continuous. To show that the lift $\tilde{\gamma}$ is Lipschitz, and not just absolutely continuous, one should use the fact that the horizontal derivative of $\tilde{\gamma}(t)$ coincides almost everywhere with the derivative of $\gamma(t)$, and so is essentially bounded.
\end{proof}

We saw in Corollary \ref{gcd} that tangents of Lipschitz differentiability spaces embedded in Euclidean space are themselves just Euclidean subspaces. For the appropriate generalization to Lipschitz differentiability spaces embedded in Carnot groups, we need the correct notion of a Carnot subgroup. This is given by the following definition.

\begin{definition}\label{carnotsubgroup}
Let $\G$ be a Carnot group with Lie algebra $\mathfrak{g}$ and horizontal layer $V_1\subseteq \mathfrak{g}$. Let $V\subset V_1$ be a vector subspace, and let $\mathfrak{h} \subset \mathfrak{g}$ be the stratified Lie sub-algebra generated by $V$. The homogeneous subgroup $H = \exp(\mathfrak{h}) \subset \G$ is called the \textit{Carnot subgroup generated by $V$}.
\end{definition}

Note that $H$ is itself a Carnot group, and any homogeneous metric $d$ on $\G$ restricts to a homogeneous metric on $H$. Moreover, $H$ is rectifiably connected in this restricted metric. 
When $d=d_{cc}$ is Carnot--Caratheodory, one can make the stronger statement that $H$ is a totally geodesic subgroup of $\G$.

\section{Symmetric spaces and Gromov hyperbolic groups}\label{symgromov}

Let $X$ be a non-compact rank-one symmetric space, so that $X = \Hy_\R^n$, $\Hy^n_\C$, $\Hy^n_\Q$, or $\Hy_\O^2$ for some $n \geq 2$, where $\Q$ and $\O$ denote the quaternion and octonion division algebras. As a convention, we normalize the Riemannian metric on $X$ to have maximal sectional curvature equal to $-1$. With the induced length metric $d_X$, the metric space $(X,d_X)$ is therefore $\CAT(-1)$. We remind the reader that $X$ is homogeneous and isotropic, so $\Isom(X)$ acts transitively on the unit tangent bundle of $X$.

For $\Gamma$ a discrete group, an isometric and properly discontinuous action $\Gamma \acts X$ is said to be \textit{convex-cocompact} if there is a convex, $\Gamma$-invariant subset $C(\Gamma) \subset X$ with $C(\Gamma)/\Gamma$ compact. This is equivalent to the seemingly weaker property that, for any point $p \in X$, the orbit map $g \mapsto g(p)$ gives a quasi-isometric embedding of $\Gamma$ into $X$ (cf. \cite[Section 1.8]{Bou95} and \cite[Section 3]{Gui}). As $C(\Gamma)$ is Gromov hyperbolic, the \v{S}varc--Milnor lemma implies that $\Gamma$ is necessarily a finitely-generated hyperbolic group. From now on, we will always assume that $\Gamma$ is non-elementary, i.e., is not finite and not virtually cyclic.

\subsection{Visual boundaries of rank-one symmetric spaces}\label{visualsec}

Let $\partial X$ denote the visual boundary of $X$, which is a topological sphere of dimension $\dim_\R(X)-1$. There are two natural classes of metrics on this boundary on which we focus. First, the visual metrics on $\partial X$ are defined by
$$d_p(x,y) = e^{-(x,y)_p} \hspace{0.5cm} \text{for } x,y  \in \partial X$$
for any $p \in X$, where $(x,y)_p$ is the Gromov product of $x$ and $y$ based at $p$ \cite[Section 2.5]{Bou95}. Every element of $\Isom(X)$ extends to a homeomorphism of $\partial X$ that is M\"obius with respect to any visual metric.

The parabolic visual metrics are similar but are better suited to the parabolic models for $X$. Namely, for any $\omega \in \partial X$ and $q \in X$, define
$$d_{\omega, q}(x,y) = e^{-(x,y)_{\omega,q}}, \hspace{0.5cm} \text{for } x,y \in \partial X \backslash \{\omega\}$$
where 
$$(x,y)_{\omega,q} = \lim_{p \rightarrow \omega}\lp (x,y)_p - d_X(p,q) \rp$$ 
is a limit taken along a geodesic ray in $X$ that is asymptotic to $\omega$. This is a metric on $\partial X \backslash \{\omega\}$, which is obtained as a limit of rescaled visual metrics $d_p$, as $p$ tends toward $\omega$ non-tangentially. We refer the reader to \cite[Section 2]{BKqf} for details, noting that the definition we give for $(x,y)_{\omega,q}$ is a consequence of \cite[Lemma 2.1]{BKqf}.

Each visual metric is M\"obius equivalent to each parabolic visual metric on their common domains \cite[Lemma 2.3]{BKqf}. An immediate consequence is that any two parabolic visual metrics are M\"obius equivalent on their common domains. Moreover, for fixed $\omega \in \partial X$, the metrics $d_{\omega,q}$ and $d_{\omega, q'}$ differ by a scalar multiple for any $q,q' \in X$. This follows from the fact that $\lim_{p \rightarrow \omega} \lp d_X(p,q) - d_X(p,q')\rp$ exists. These two metrics coincide precisely when $q$ and $q'$ lie on the same horosphere based at $\omega$.

We should note that the above discussion holds equally well for boundaries of $\CAT(-1)$ metric spaces. The important point for us is that, when $X$ is non-compact rank-one symmetric, the boundary has much additional structure. Namely, given a point $\omega \in \partial X$, there is a natural identification of $\partial X \backslash \{\omega\}$ with a Carnot group $\G$. Here, the horizontal distribution on $\G$ arises from vectors that are tangent to the lines and circles in $\partial X \backslash \{\omega\}$ formed by isometric copies of $\Hy_\R^2$ in $X$. Moreover, the subgroup of $\Isom(X)$ that fixes $\omega$ corresponds to the collection of affine transformations of $\G$. In particular, this includes all left translations and dilations.

\begin{lemma} \label{paralemma}
Each parabolic visual metric $d_{\omega,q}$ is a homogeneous distance on $\G = \partial X \backslash \{\omega\}$.
\end{lemma}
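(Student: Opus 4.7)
The plan is to verify the three defining properties of a homogeneous distance in Definition \ref{homogdistance}: that $d_{\omega,q}$ induces the manifold topology of $\G$, is left-invariant, and scales by $\lambda$ under the Carnot dilations $\delta_\lambda$. The first property comes essentially for free: the text records that $d_{\omega,q}$ is Möbius equivalent to any visual metric $d_p$ on $\partial X \setminus \{\omega\}$, and on any bounded subset a Möbius equivalence is bi-Lipschitz, so the metrics induce the same topology on each bounded piece. Since $d_p$ induces the standard visual topology on $\partial X$, and the identification $\partial X \setminus \{\omega\} = \G$ is compatible with the manifold topology on $\G$, the topological condition follows.

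The core of the argument is to compute how an isometry $\phi \in \Isom(X)$ fixing $\omega$ transforms $d_{\omega,q}$. Using $\phi$-invariance of the Gromov product, $(\phi(x),\phi(y))_p = (x,y)_{\phi^{-1}(p)}$, together with the fact that $\phi(p) \to \omega$ whenever $p \to \omega$, I would substitute into the defining limit of $(\cdot,\cdot)_{\omega,q}$ and reindex to obtain
\begin{equation*}
(\phi(x),\phi(y))_{\omega,q} = (x,y)_{\omega,q} + \beta(\phi,q),
\end{equation*}
where $\beta(\phi,q) = \lim_{p\to\omega}\bigl(d_X(p,q) - d_X(p,\phi^{-1}(q))\bigr) = b_\omega(q) - b_\omega(\phi^{-1}(q))$ is the Busemann difference of $q$ and $\phi^{-1}(q)$ at $\omega$ (with $b_\omega$ the Busemann function), a constant independent of $x,y$. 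Exponentiating then shows that $\phi$ acts on $(\partial X \setminus \{\omega\}, d_{\omega,q})$ as a similarity with ratio $e^{-\beta(\phi,q)}$.

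The remaining step is to identify which $\phi$ realize the Carnot group operations on $\G$, using the Iwasawa-type decomposition of the stabilizer of $\omega$ in $\Isom(X)$. Left translations of $\G$ correspond to the nilpotent subgroup of horospherical (parabolic) isometries, which fix every horosphere based at $\omega$; for such $\phi$ one has $b_\omega(\phi^{-1}(q)) = b_\omega(q)$, so $\beta(\phi,q) = 0$ and $d_{\omega,q}$ is left-invariant. The dilations $\delta_\lambda$ correspond to the one-parameter subgroup of loxodromic isometries $\phi_\lambda$ along a geodesic asymptotic to $\omega$, translating horospheres by $\log\lambda$ toward $\omega$; this gives $\beta(\phi_\lambda,q) = -\log\lambda$, hence $d_{\omega,q}(\delta_\lambda x, \delta_\lambda y) = \lambda \, d_{\omega,q}(x,y)$.

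The main obstacle is the careful bookkeeping in this last step: one must match the sign and direction conventions between the Carnot-group side (where $\delta_\lambda$ stretches distances by $\lambda$) and the hyperbolic side (where a loxodromic translation shifts Busemann levels by $\pm \log\lambda$) so that the scaling constant emerges as exactly $\lambda$ and not $\lambda^{-1}$. Once the similarity formula $d_{\omega,q}(\phi(x),\phi(y)) = e^{-\beta(\phi,q)} d_{\omega,q}(x,y)$ is in hand, however, this reduces to a concrete unpacking of the standard parabolic model of the boundary of $X$.
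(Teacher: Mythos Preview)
Your proposal is correct and follows essentially the same route as the paper's proof: both verify the topology condition via comparison with the visual metric $d_p$, and both deduce left-invariance and $1$-homogeneity from the transformation law for $d_{\omega,q}$ under isometries fixing $\omega$, using that left translations preserve horospheres at $\omega$ (so the Busemann shift vanishes) while dilations $\delta_\lambda$ translate along a geodesic toward $\omega$ by $\log\lambda$ (so the shift is $\pm\log\lambda$). The only cosmetic difference is that the paper packages the key computation as the identity $d_{\omega,g(q)}(g(x),g(y)) = d_{\omega,q}(x,y)$ and then compares $d_{\omega,g(q)}$ with $d_{\omega,q}$ via the scalar relation already noted in the text, whereas you compute the additive shift $\beta(\phi,q)$ in the parabolic Gromov product directly; these are equivalent formulations of the same calculation.
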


\begin{proof}
That $d_{\omega,q}$ induces the Euclidean topology is a direct consequence of the standard fact that any visual metric $d_p$ induces the spherical topology on $\partial X$. The other two properties are consequences of the identity 
$$d_{\omega, g(q)}(g(x),g(y)) = d_{\omega, q}(x,y) \hspace{0.3cm} \text{for } x,y \in \partial X \backslash \{\omega\}$$
whenever $g \in \Isom(X)$ fixes $\omega$. 

Indeed, every left translation of $\partial X \backslash \{\omega\}$ is the boundary map of some element $g \in \Isom(X)$ that fixes $\omega$ and preserves the horospheres in $X$ that are based at $\omega$. As $g(q)$ and $q$ lie on the same horosphere, we have $d_{\omega, g(q)} = d_{\omega, q}$, which shows that $d_{\omega, q}$ is left-invariant.

Similarly, every dilation $\delta_{\lambda}$ of $\partial X \backslash \{\omega\}$ is the boundary map of some element $g \in \Isom(X)$ that acts as a translation by distance $\log\lambda$ along a geodesic in $X$ that is asymptotic to $\omega$. This means that 
$$\lim_{p \rightarrow \omega} \lp d_X(p,q) - d_X(p,g(q))\rp = \log\lambda,$$
so we obtain $d_{\omega, g(q)} = \lambda d_{\omega, q}$. This shows that $d_{\omega, q}$ is 1-homogeneous.
\end{proof}

\begin{remark}
Identifying $\partial X \backslash \{\omega\}$ with a horosphere in $X$ based at $\omega$, one can obtain a sub-Riemannian Carnot--Caratheodory metric on $\partial X \backslash \{\omega\}$ as a limit of Riemannian metrics (cf. \cite{BuKu}). In general, the parabolic visual metrics $d_{\omega,q}$ are not geodesic, and so will not coincide with the sub-Riemannian metric. However, the parabolic visual metrics are better suited for our work in Section \ref{mainproof}.
\end{remark}

We should note that the more general identity 
$$d_{g(\omega), g(q)}(g(x),g(y)) = d_{\omega, q}(x,y) \hspace{0.3cm} \text{for } x,y \in \partial X \backslash \{\omega\}$$
holds for any $g \in \Isom(X)$. Using that $\Isom(X)$ acts transitively on the unit tangent bundle of $X$, for any two pairs $\omega, \omega' \in \partial X$ and $q,q' \in X$, there is $g \in \Isom(X)$ with $g(\omega) = \omega'$ and $g(q) = q'$. Thus, the parabolic boundaries $(\partial X \backslash \{\omega\}, d_{\omega,q})$ and $(\partial X \backslash \{\omega'\}, d_{\omega',q'})$ are isometrically equivalent.

We therefore consider the Carnot group $\G$, equipped with any parabolic metric $d = d_{\omega,q}$, to be a model for the local geometry of $(\partial X, d_p)$. By our discussion above, it is clear that $(\G,d)$ is M\"obius equivalent to $(\partial X \backslash \{\omega\},d_p)$, regardless of the choice of $\omega \in \partial X$. In particular, this means that $(\partial X,d_p)$ is locally bi-Lipschitz equivalent to $(\G,d)$, so it is also locally bi-Lipschitz equivalent to $\G$ equipped with any homogeneous distance. 

Of course, the Carnot groups that locally model boundaries of non-compact rank-one symmetric spaces are a special sort. They are either Euclidean, Heisenberg, or of ``Heisenberg type":
\begin{enumerate}
\item[(i)] if $X = \H_{\R}^n$, then $\G = \R^{n-1}$ is Euclidean space;
\item[(ii)] if $X = \H_{\C}^n$, then $\G = \mathcal{H}_{\C}^{n-1}$ is the $n$-th Heisenberg group;
\item[(iii)] if $X = \H_{\Q}^n$, then $\G = \mathcal{H}_{\Q}^{n-1}$ is the $n$-th quaternionic Heisenberg group;
\item[(iv)] if $X = \H_{\O}^2$, then $\G = \mathcal{H}_{\O}^1$ is the first octonionic Heisenberg group.
\end{enumerate}
Together, these Carnot groups form the class of \textit{Iwasawa groups}. In each case, isometries of $X$ act on $\partial X$ by conformal maps, i.e. smooth maps for which the restriction of the derivative to the horizontal layer is a similarity. For $\Hy_\R^n$ this is classical, and the boundary action is by (classical) M\"obius transformations; for $\Hy_\C^n$, this is shown in \cite[p. 328]{KR2}; the other cases follow from \cite[Corollary 11.2]{Pa89} and \cite[Corollary 7.2]{CC06}.

If $\Gamma \acts X$ is a convex-cocompact action with $\Gamma$ non-elementary, the limit set $\Lambda(\Gamma) \subset \partial X$ is defined to be the visual boundary of any convex, $\Gamma$-invariant subset $C(\Gamma) \subset X$ for which $C(\Gamma)/\Gamma$ is compact. Equivalently, if $p \in X$ is any point, $\Lambda(\Gamma)$ is the image of the boundary map $\partial \Gamma \rightarrow \partial X$ induced by the quasi-isometric orbit embedding $\Gamma \rightarrow X$. In particular, $\Lambda(\Gamma)$ is a closed subset, and the M\"obius action $\Gamma \acts \partial X$ leaves $\Lambda(\Gamma)$ invariant. Thus, we obtain a natural M\"obius action $\Gamma \acts \Lambda(\Gamma)$, when we equip $\Lambda(\Gamma)$ with the restriction of any visual metric on $\partial X$. It is known that the corresponding Hausdorff measure is Ahlfors regular \cite[Section 2.7]{Bou95}.

\subsection{Boundaries of Gromov hyperbolic groups} \label{consec}

Much of the content of this subsection is background and will not be needed in the remainder of the paper. However, we believe it should be recorded in the literature, and some of it is needed in the proof of our main result. For further background and terminology about Gromov hyperbolic groups, we refer the reader to \cite{BKconf}.

Let $\Gamma$ be a non-elementary Gromov hyperbolic group, by which we mean in particular that $\Gamma$ is finitely-generated. The visual boundary $\partial \Gamma$ is perfect and compact, and it admits a collection of visual Gromov metrics, each of which is Ahlfors regular \cite{Coor}. Any two such metrics are quasisymmetrically equivalent. Moreover, the action of $\Gamma$ on itself by left multiplication extends naturally to a boundary action $\Gamma \acts \partial \Gamma$ that is uniformly quasi-M\"obius (with linear distortion function $\eta(t)$) with respect to any visual Gromov metric \cite[Section 6]{Kin}.

We will use $\mathcal{J}_{\AR}(\partial \Gamma)$ to denote the Ahlfors regular conformal gauge of $\partial \Gamma$ that contains these metrics, i.e., the collection of all Ahlfors regular metric spaces quasisymmetric to $\partial \Gamma$. By definition,
$$\cdim_{\AR}(\partial \Gamma) = \inf \{\Hdim(Z) : Z \in \mathcal{J}_{\AR}(\partial \Gamma) \}.$$
Many quasi-isometric uniformization statements about $\Gamma$ boil down to finding a highly regular metric in $\mathcal{J}_{\AR}(\partial \Gamma)$. The following gives a list of equivalent notions for ``highly regular." In what follows, all of the metric spaces that appear will be Ahlfors regular, and we endow them with the corresponding Hausdorff measure.

The following result is surely known to experts, but we include it as a useful summation. The definitions of Poincar\'e inequalities and Loewner spaces can be found in \cite{HK98} or \cite{He01}.

\begin{theorem} \label{tfae}
For $Z \in \mathcal{J}_{\AR}(\partial \Gamma)$ of Hausdorff dimension $Q>1$, the following are equivalent.
\begin{enumerate}[(i)]
\item\label{tfaePI} $Z$ admits a $(1,p)$-Poincar\'e inequality for some $p \geq 1$.
\item\label{tfaeLD} $Z$ is a Lipschitz differentiability space.
\item\label{tfaeCD} $Z$ has Ahlfors regular conformal dimension equal to $Q$.
\item\label{tfaeQL} $Z$ is a $Q$-Loewner space.
\item\label{tfaePM} $Z$ admits a path family of positive $p$-modulus for some $p \geq 1$.
\end{enumerate}
\end{theorem}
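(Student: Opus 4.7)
The plan is to assemble a cycle of implications by combining standard results from analysis on Ahlfors regular metric spaces with the dynamical self-similarity of $\partial \Gamma$ inherited from the quasi-M\"obius action $\Gamma \acts \partial \Gamma$. The cycle I would target is
\begin{equation*}
\textup{(iv)} \Rightarrow \textup{(i)} \Rightarrow \textup{(ii)} \Rightarrow \textup{(v)} \Rightarrow \textup{(iii)} \Rightarrow \textup{(iv)}.
\end{equation*}

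Three of these arrows require no input from the group action. The Heinonen--Koskela equivalence between the $Q$-Loewner condition and the $(1,Q)$-Poincar\'e inequality on Ahlfors $Q$-regular spaces \cite{HK98} immediately gives (iv) $\Rightarrow$ (i) with $p = Q$, and Cheeger's theorem \cite{Ch99} that PI spaces are Lipschitz differentiability spaces gives (i) $\Rightarrow$ (ii). For (ii) $\Rightarrow$ (v), I would invoke the theory of Alberti representations in Lipschitz differentiability spaces: since $Q > 1$, any chart must have positive dimension, so the measure decomposes along a family of rectifiable curves, and this decomposition yields a curve family of positive $p$-modulus for some $p \geq 1$. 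Equivalently, one can apply Proposition \ref{LQ} directly, whose Lipschitz quotient conclusion is enough to produce such a curve family by lifting segments in the chart target.

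The substantive arrow is (iii) $\Rightarrow$ (iv), a converse to Tyson's theorem (iv) $\Rightarrow$ (iii), and it is here that the hyperbolic group enters essentially. My approach would combine two ingredients: (a) a theorem of Keith--Laakso asserting that an Ahlfors $Q$-regular space which attains its Ahlfors regular conformal dimension at $Q$ must admit a weak tangent that is $Q$-Loewner; and (b) the ``conformal elevator'' principle of Bonk--Kleiner \cite{BKconf}: the uniformly quasi-M\"obius action $\Gamma \acts Z$ allows any ball of $Z$ to be mapped, with uniformly controlled distortion, to a ball of definite diameter. This dynamical rescaling transfers the Loewner property from a weak tangent back to $Z$ itself. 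The remaining link (v) $\Rightarrow$ (iii) then follows from Tyson's theorem after a short H\"older-inequality argument on the bounded Ahlfors $Q$-regular space $Z$: positive $p$-modulus for any $p \geq 1$ forces positive $Q$-modulus, and Tyson's theorem then gives $\cdim_{\AR}(Z) = Q$.

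The main obstacle will be the constant-tracking required in (iii) $\Rightarrow$ (iv). One must verify that the modulus inequality provided by the Loewner condition on a weak tangent survives both the Gromov--Hausdorff limit producing that tangent and the quasi-M\"obius distortion introduced by the conformal elevator, and that the Loewner data one recovers on $Z$ is uniform across base points and scales. This is precisely the bookkeeping underlying the analogous arguments in \cite{BKconf}, and I would follow that template closely, using the uniform quasi-M\"obius control on $\Gamma \acts Z$ recorded in Subsection \ref{consec}.
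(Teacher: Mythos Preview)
Your cycle is built from the same ingredients as the paper's (Cheeger, Heinonen--Koskela, Keith--Laakso, Bonk--Kleiner, Tyson), and your treatment of (iv)$\Rightarrow$(i)$\Rightarrow$(ii) and of (iii)$\Rightarrow$(iv) matches the paper's; your description of the latter simply unpacks \cite[Theorem 1.3]{BKconf}. There is, however, a genuine gap in your (v)$\Rightarrow$(iii). The H\"older step ``positive $p$-modulus forces positive $Q$-modulus'' only runs in the direction $p\le Q$: on a finite-measure space one has $\int\rho^p \le (\int\rho^Q)^{p/Q}\mu(Z)^{1-p/Q}$, so $\mathrm{Mod}_p>0\Rightarrow\mathrm{Mod}_Q>0$ when $p\le Q$, but for $p>Q$ the inequality goes the other way and nothing follows. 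Since (v) allows any $p\ge 1$, your argument does not close the equivalence. The paper handles exactly this by invoking Keith--Laakso \cite[Theorem 4.0.5]{KL}: positive $p$-modulus for \emph{some} $p$ forces a weak tangent of $Z$ to carry a curve family of positive $1$-modulus; now $1\le Q$, so H\"older applies in the tangent, and the self-similarity of $\partial\Gamma$ (via \cite[Corollary 6.1.8]{MTcdim}) returns $\cdim_{\AR}(Z)=Q$. The fix is thus available with tools you already cite.

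A smaller issue in (ii)$\Rightarrow$(v): the ``equivalently, apply Proposition \ref{LQ}'' alternative does not work as written, since Proposition \ref{LQ} concerns the \emph{tangent} map $\hat\phi$, not $\phi$ itself; lifting segments there produces curves in a tangent of $Z$, not in $Z$. Your first route through Bate's Alberti representations is the correct one, though extracting positive $1$-modulus from a bare decomposition $\mu=\int\mu_\gamma\,dP$ requires a word about the densities $d\mu_\gamma/d\mathcal{H}^1$. The paper avoids this by routing (ii) straight to (iii): it passes to a tangent via \cite[Theorem 1.15]{CKS15}, where the Alberti representation is supported on geodesic lines and the modulus estimate is immediate.
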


\begin{proof}
As $Z$ is in $\mathcal{J}_{\AR}(\partial \Gamma)$ and has Hausdorff dimension $Q$, it is Ahlfors $Q$-regular. We first show that properties \eqref{tfaePI} through \eqref{tfaeQL} are equivalent.

The implication \eqref{tfaePI} implies \eqref{tfaeLD} is a consequence of the main theorem of Cheeger in \cite{Ch99}, using the fact that $Z$ is Ahlfors regular and hence a doubling metric measure space. 

To see that \eqref{tfaeLD} implies \eqref{tfaeCD}, we first note that \cite[Theorem 1.15]{CKS15} shows that $Z$ has a tangent $\hat{Z}$ that admits an \ti{Alberti representation} supported on geodesic lines. (Here, $\hat{Z}$ is also Ahlfors $Q$-regular, so we use the corresponding Hausdorff measure.)  We refer the reader to \cite{CKS15} for the definition of an Alberti representation; all we will need is the fact that the geodesic lines in the support of this Alberti representation constitute a path family in $\hat{Z}$ that has positive $1$-modulus. Restricting these geodesics to their intersections with a large fixed ball in $\hat{Z}$ gives a path family of positive $1$-modulus inside of a compact set. By H\"older's inequality, this path family has positive $Q$-modulus as well. It now follows from \cite[Proposition 4.1.8]{MTcdim}, and the fact that $Z$ itself is Ahlfors $Q$-regular, that $Z$ has Ahlfors regular conformal dimension equal to $Q$.

That \eqref{tfaeCD} implies \eqref{tfaeQL} is a consequence of \cite[Theorem 1.3]{BKconf} (and the remark following the statement of that theorem).

Since $Z$ is Ahlfors $Q$-regular, property \eqref{tfaeQL}, that $Z$ is $Q$-Loewner, implies that $Z$ has a $(1,Q)$-Poincar\'e inequality and hence property \eqref{tfaePI} \cite[Theorem 5.12]{HK98}.

It remains to show that \eqref{tfaePM} is equivalent to the other properties. Of course, property \eqref{tfaeQL} implies the existence of a path of positive $Q$-modulus and hence property \eqref{tfaePM}. On the other hand, if $Z$ admits a path family with positive $p$-modulus for some $p\geq 1$, then \cite[Theorem 4.0.5]{KL} shows that $Z$ has a tangent with a path family of positive $1$-modulus, hence of positive $Q$-modulus as above. It then follows from \cite[Corollary 6.1.8]{MTcdim} that $Z$ has Ahlfors regular conformal dimension equal to $Q$, i.e., that \eqref{tfaeCD} holds.
\end{proof}

\begin{remark} \label{tfaermk}
When $\Gamma \acts X$ is a convex-cocompact action on a non-compact rank-one symmetric space $X$, as in the previous subsection, the boundary homeomorphism between $\partial \Gamma$ and $\Lambda(\Gamma)$ is quasisymmetric. As $\Lambda(\Gamma)$ is Ahlfors regular, we have $\Lambda(\Gamma) \in \mathcal{J}_{\AR}(\partial \Gamma)$. Consequently,
$$\Hdim(\Lambda(\Gamma)) \geq \cdim_{\AR}(\Lambda(\Gamma)) = \cdim_{\AR}(\partial \Gamma),$$ which is the inequality that appears in Theorems \ref{BKthm} and \ref{mainthm}.

The case of equality $\Hdim(\Lambda(\Gamma)) = \cdim_{\AR}(\partial \Gamma)$ means precisely that condition \eqref{tfaeCD} in Theorem \ref{tfae} holds for $Z = \Lambda(\Gamma)$ and $Q = \cdim_{\AR}(\partial \Gamma)$. Thus, if $Q > 1$, then the other conditions hold for $\Lambda(\Gamma)$ as well. 

On the other hand, if $Q \leq 1$, there is only one possibility for the action $\Gamma \acts X$. Indeed, since $Q \leq 1$, the topological dimension of $\Lambda(\Gamma)$ is either $0$ or $1$. If the former, then $\partial \Gamma$ also has topological dimension 0, and $\Gamma$ is virtually a free group by \cite[Theorem 8.1]{KapBen}. This means that $\partial \Gamma$ is a uniformly perfect Cantor set, which is known to have Ahlfors regular conformal dimension equal to 0. Hence, $Q=0$, which implies that $\Lambda(\Gamma)$ is finite, and so $\Gamma$ is elementary, a contradiction. Thus, the topological dimension of $\Lambda(\Gamma)$ must be equal to 1, which then means that $Q=1$ as well. Applying \cite[Theorem 1.1]{BKqf} shows that $\Gamma$ is virtually Fuchsian and the action $\Gamma \acts X$ stabilizes an isometric copy of $\Hy^2_\R$ in $X$.

These arguments justify our subsequent restriction to the case $Q>1$ and our phrasing of Theorem \ref{BKthm} above (which, in \cite{BKconf}, is stated only for $Q > 1$).
\end{remark}

In the remainder of the section, we make some general remarks about the types of Lipschitz differentiability structures that can appear on boundaries of hyperbolic groups. 

Let $\Gamma$ be a Gromov hyperbolic group with $\cdim_{\AR}(\partial \Gamma) = Q >1$. If there is $Z \in \mathcal{J}_{\AR}(\partial \Gamma)$ that is a Lipschitz differentiability space, then Theorem \ref{tfae} guarantees that $\dim_H(Z) = Q$. In other words, the Hausdorff dimension of any highly regular metric on $\partial \Gamma$ depends only on the quasi-isometry class of $\Gamma$. Being slightly imprecise, one could simply refer to $Q$ as the Hausdorff dimension of $\partial \Gamma$. 

We claim that a similar statement holds for the dimension of the differentiability structure. In fact, we can establish something stronger.

\begin{lemma} \label{ergodic}
\begin{enumerate}[(i)]
\item If $Z \in \mathcal{J}_{\AR}(\partial \Gamma)$ is a Lipschitz differentiability space, then the action $\Gamma \acts Z$, obtained by conjugating the boundary action $\Gamma \acts \partial \Gamma$ by a quasisymmetric homeomorphism between $\partial \Gamma$ and $Z$, is ergodic with respect to $Q$-dimensional Hausdorff measure.
\item If $Z_1, Z_2 \in \mathcal{J}_{\AR}(\partial \Gamma)$ are Lipschitz differentiability spaces, then for any Borel sets $U_1 \subset Z_1$, $U_2 \subset Z_2$ of positive measure, there are positive measure subsets $A_1 \subset U_1$, $A_2 \subset U_2$ that are bi-Lipschitz equivalent.
\end{enumerate}
\end{lemma}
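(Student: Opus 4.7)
For part (i), I equip $Z$ with its Hausdorff $Q$-measure $\mathcal{H}^Q$. The boundary action $\Gamma \acts Z$ is by uniformly $\eta$-quasi-Möbius maps with \emph{linear} distortion $\eta$, and since $Z$ is bounded, each individual $g \in \Gamma$ restricts to a bi-Lipschitz self-homeomorphism of $Z$ by \cite[Remark 3.2]{Kin}. The plan is the classical density-point argument for convergence actions: suppose $E \subset Z$ is $\Gamma$-invariant with $0 < \mathcal{H}^Q(E) < \mathcal{H}^Q(Z)$, and pick Lebesgue density points $x \in E$ and $y \in Z \setminus E$. Using that $\Gamma$ is non-elementary (so pairs of loxodromic fixed points are dense in $Z \times Z$), I choose loxodromic $g_n \in \Gamma$ whose repelling (resp.\ attracting) fixed points tend to $x$ (resp.\ $y$). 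The north--south dynamics of the $g_n$ expand arbitrarily small balls around $x$ onto sets comparable to a fixed ball around $y$; the uniform quasi-Möbius control, together with Ahlfors $Q$-regularity of $\mathcal{H}^Q$, bounds the measure distortion of these expansions. Hence the density of $E = g_n(E)$ at $y$ tends to the density of $E$ at $x$, namely $1$, contradicting $y$ being a density point of the complement.

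For part (ii), let $f \colon Z_1 \to Z_2$ be the quasisymmetric homeomorphism conjugating the two $\Gamma$-actions. By Theorem~\ref{tfae}, each $Z_i$ is $Q$-Loewner, hence $f$ is geometrically quasiconformal and, by the standard Heinonen--Koskela/Tyson theory of quasiconformal maps between Loewner spaces, absolutely continuous in both directions. In particular the metric Jacobian $J_f$ is $\mathcal{H}^Q$-a.e.\ positive and finite, so by a Lusin/Egoroff argument any positive-measure Borel subset of $Z_1$ contains a positive-measure compact subset on which $f$ is bi-Lipschitz onto its image in $Z_2$. To ensure both endpoints lie inside the prescribed $U_i$, I would first apply part (i): by ergodicity of the $\Gamma$-action on $Z_1$, the $\Gamma$-orbit of $U_1$ has full measure, so there exists $g \in \Gamma$ with $W := g(U_1) \cap f^{-1}(U_2)$ of positive measure. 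Extracting a bi-Lipschitz piece $W' \subset W$ for $f$, and setting $A_1 := g^{-1}(W') \subset U_1$ and $A_2 := f(W') \subset U_2$, the composition $a \mapsto f(g(a))$ realizes the desired bi-Lipschitz equivalence $A_1 \to A_2$: the factor $g$ is bi-Lipschitz on the compact set $A_1$ by the above remark, and $f$ is bi-Lipschitz on $W'$ by construction.

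The main obstacle is the Lusin/Egoroff step in part (ii): converting the $\mathcal{H}^Q$-a.e.\ two-sided bounds on the metric Jacobian of $f$ into genuine pointwise bi-Lipschitz control on positive-measure subsets. This goes beyond the integral statement of absolute continuity and requires pointwise metric differentiability of quasiconformal maps between Ahlfors regular Loewner spaces---invoking the Alberti/Cheeger structure on each $Z_i$ furnished by their Lipschitz differentiability---combined with a density-point argument on the level sets $\{c \leq J_f \leq C\}$. Part (i) is what justifies reducing to a single essential range of $J_f$ and what supplies the group element $g$ that brings $U_1$ into contact with $f^{-1}(U_2)$.
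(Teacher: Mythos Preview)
Your outline for part~(ii) matches the paper's argument: both use absolute continuity of the quasisymmetric $f$ (from the Loewner property, via \cite{HKST}), then ergodicity to bring $U_1$ into contact with $f^{-1}(U_2)$, and finally extract a bi-Lipschitz piece. The step you flag as the main obstacle is handled in the paper not by metric differentiability and Jacobian level sets, but by a direct citation: \cite[Section~10]{HKST} shows that a quasisymmetric homeomorphism between $Q$-regular $Q$-Loewner spaces is Lipschitz on each piece of a countable decomposition covering all but a null set; applying this to both $f$ and $f^{-1}$ yields the bi-Lipschitz piece immediately. (A cosmetic difference: the paper acts by $\Gamma$ on $Z_2$ and replaces $f$ by $g\circ f$, whereas you act on $Z_1$ and use $f\circ g$; both are fine.)

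The real gap is in part~(i). Your assertion that ``uniform quasi-M\"obius control, together with Ahlfors $Q$-regularity, bounds the measure distortion'' is false without additional input: there are quasisymmetric self-maps of the Ahlfors $1$-regular space $\mathbb{R}$ that send a null Cantor set in $[0,1]$ to one of positive measure, so relative densities of arbitrary Borel sets inside balls are \emph{not} controlled by quasi-M\"obius data and Ahlfors regularity alone. The fact that each individual $g$ is bi-Lipschitz (Remark~3.2 of \cite{Kin}) does not help, since the constants are not uniform in $g$. The paper fills this gap by first using Theorem~\ref{tfae} to get that $Z$ is $Q$-Loewner, then \cite{KZ08} to upgrade to a $(1,p)$-Poincar\'e inequality with $p<Q$, and then \cite[Section~6]{KMS12} to obtain a \emph{uniform density property}: a homeomorphism $\phi$ with
\[
\mu\bigl(g(E\cap B)\bigr)\ \le\ \phi\!\left(\frac{\mu(E\cap B)}{\mu(B)}\right)
\]
for all $g\in\Gamma$, all balls $B$, and all Borel $E$. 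This is precisely the quantitative distortion bound your density-point argument needs. With it in hand, the paper does not use loxodromic fixed points at all; instead it invokes \cite[Lemma~5.1]{BKqm} to find $g\in\Gamma$ with $\diam(Z\setminus g(B))$ arbitrarily small, so a single blow-up of a ball where $E$ has small density forces $\mu(E)<\epsilon$. Your north--south scheme could be made to work too, but only after you import this Loewner-based estimate.
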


\begin{proof}
To verify (i), first note that $Z$ is Ahlfors $Q$-regular and $Q$-Loewner by Theorem \ref{tfae}. Let $\mu$ denote the $Q$-dimensional Hausdorff measure, which we may normalize to have $\mu(Z) = 1$.

The action $\Gamma \acts Z$ is uniformly quasi-M\"obius and, hence, uniformly quasiconformal: there is $K<\infty$ for which each $g \in \Gamma$ acts as a $K$-quasiconformal homeomorphism of $Z$. Using that $Z$ is Ahlfors $Q$-regular and $Q$-Loewner, this means that each $g \in \Gamma$ is absolutely continuous in measure \cite[Corollary 8.15]{HKST}. Hence, $\Gamma \acts Z$ is a measure-class preserving action. Ergodicity of such an action means, as usual, that any $\Gamma$-invariant Borel set has measure 0 or 1.

To show this, we note that each $g \in \Gamma$ has a uniform density property (in the sense of \cite[Section 6]{KMS12}) with uniform distortion control. Actually, we will use a slightly different property, but which is of the same spirit: there is a homeomorphism $\phi \colon [0,\infty) \rightarrow [0,\infty)$ such that
$$\mu(g(E \cap B)) \leq \phi \lp \frac{\mu(E \cap B)}{\mu(B)} \rp$$
for every $g \in \Gamma$, every Borel set $E \subset Z$, and every ball $B \subset Z$. A minor modification of the proof of \cite[Theorem 6.3]{KMS12} shows this easily, once we remark that $Z$ satisfies a $(1,p)$-Poincar\'e inequality for some $1< p < Q$ by the main result of \cite{KZ08}.

Now, suppose that $E \subset Z$ is a $\Gamma$-invariant Borel set with $\mu(E) < 1$. Let $\e > 0$ be arbitrary, and let $B \subset Z$ be a ball, centered at a point of density for $Z\backslash E$, with radius small enough that
$$\phi \lp \frac{\mu(E \cap B)}{\mu(B)} \rp < \frac{\e}{2}.$$
By \cite[Lemma 5.1]{BKqm}, there are elements $g \in \Gamma$ for which $\diam(Z \backslash g(B))$ is arbitrarily small. In particular, we can find $g \in \Gamma$ for which
$$\mu(Z \backslash g(B)) < \e/2.$$
Using that $E$ is $\Gamma$-invariant, we have $E = g(E) \subset g(E \cap B) \cup \lp Z \backslash g(B) \rp$,
and thus
$$\mu(E) \leq \mu(g(E \cap B)) + \mu(Z \backslash g(B)) < \phi \lp \frac{\mu(E \cap B)}{\mu(B)} \rp + \frac{\e}{2} < \e.$$
As $\e>0$ was arbitrary, we see that $\mu(E) = 0$. Hence, $\Gamma \acts Z$ is ergodic.

Let us now verify part (ii). Again, we know that $Z_1$ and $Z_2$ are both Ahlfors $Q$-regular and $Q$-Loewner. Moreover, there is a quasisymmetric homeomorphism $f \colon Z_1 \rightarrow Z_2$, which is absolutely continuous in measure by \cite[Corollary 8.15]{HKST}. In particular, the image $f(U_1)$ has positive measure in $Z_2$. By part (i), the action $\Gamma \acts Z_2$ is ergodic, so there is $g \in \Gamma$ for which $g(f(U_1)) \cap U_2$ has positive measure. Replacing $f$ by the composition $g \circ f$, we may suppose without loss of generality that $f(U_1) \cap U_2$ has positive measure. This also means that $U_1 \cap f^{-1}(U_2)$ has positive measure.

It is shown in \cite[Section 10]{HKST} that if $f \colon Z_1 \rightarrow Z_2$ is a quasisymmetric homeomorphism between Ahlfors $Q$-regular, $Q$-Loewner spaces, then $Z_1$ can be covered, up to a set of measure zero, by Borel sets on which $f$ is Lipschitz. In particular, given any positive measure subset $B \subset Z_1$, there is a positive measure set $A \subset B$ on which $f$ is Lipschitz. Using the same argument for $f^{-1}$, along with the fact that $f$ and $f^{-1}$ preserve sets of measure zero, it is easy to see that we can actually find a positive measure subset $A \subset B$ on which $f$ is bi-Lipschitz. Applying this fact to the set $U_1 \cap f^{-1}(U_2) \subset Z_1$, we find a positive measure subset $A_1 \subset U_1\cap f^{-1}(U_2)$ on which $f$ is bi-Lipschitz. Then set $A_2 = f(A_1) \subset U_2$.
\end{proof}

An immediate consequence of Lemma \ref{ergodic}(ii) is that there is a unique integer $k$ for which each chart in any Lipschitz differentiability space $Z \in \mathcal{J}_{\AR}(\partial \Gamma)$ has dimension $k$. It makes sense to call this integer the analytic dimension of $\partial \Gamma$, and once again it depends only on the quasi-isometry class of $\Gamma$. Thus, in the case that $\mathcal{J}_{\AR}(\partial \Gamma)$ contains a Lipschitz differentiability space, there are three natural well-defined dimensions to consider: the topological dimension of $\partial \Gamma$, the Hausdorff dimension of $\partial \Gamma$, and the analytic dimension of $\partial \Gamma$. 

At the same time, it should be quite rare for $\mathcal{J}_{\AR}(\partial \Gamma)$ to contain a Lipschitz differentiability space. Currently, the only known examples come from uniform lattices in non-compact rank-one symmetric spaces and uniform lattices in certain types of hyperbolic buildings \cite{BP99}. It makes more sense to look for rigidity phenomena for such groups $\Gamma$. One might expect the topological, Hausdorff, and analytic dimensions of $\partial \Gamma$ to function as characteristic quantities for rigidity.

There are some intimations toward this type of rigidity when $\Gamma$ is a manifold group. If $\Gamma = \pi_1(M)$ with $M$ a closed, negatively curved Riemannian manifold, of dimension at least 3, then the action by deck transformations $\Gamma \acts \tilde{M}$ on the universal Riemannian cover has compact quotient. Rescaling the metric, we may assume that the maximal sectional curvature of $\tilde{M}$ equals $-1$. The visual boundary $\partial \tilde{M}$, equipped with any visual metric, is then in $\mathcal{J}_{\AR}(\partial \Gamma)$. 

If $\partial \tilde{M}$ has a path family of positive $p$-modulus for some $p \geq 1$, then a theorem of Connell shows that $\tilde{M}$ is isometric to a non-compact rank-one symmetric space (\cite[Theorem 4.3]{Con} for Patterson--Sullivan measures). This symmetric space is uniquely determined by the topological and Hausdorff dimensions of $\partial \tilde{M}$, as it is locally modeled by an Iwasawa group. It is not clear, however, whether the same conclusion holds if one assumes only that $\mathcal{J}_{\AR}(\partial \Gamma)$ contains a Lipschitz differentiability space.

Finally, let us record an analog of the Patterson--Sullivan case in Connell's result that uses some of the ideas we will see below. 

\begin{prop}\label{connell}
Let $\Gamma \acts X$ be a convex-cocompact action on a $\CAT(-1)$ space $X$ with $\Hdim(\Lambda(\Gamma)) > 1$. Suppose that $\Lambda(\Gamma)$ admits a path family of positive $p$-modulus for some $p \geq 1$. Then every geodesic in $X$ whose endpoints lie in $\Lambda(\Gamma)$ is contained in an isometrically embedded copy of $\Hy_\R^2$ in $X$ for which the boundary circle lies entirely in $\Lambda(\Gamma)$.
\end{prop}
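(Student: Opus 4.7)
The plan is to extract tangential line structure from $\Lambda := \Lambda(\Gamma)$ using the positive-modulus hypothesis, to transfer this structure to the $\CAT(-1)$ ambient geometry of $X$ to produce isometric $\Hy_\R^2$-planes, and to propagate the result to every geodesic with endpoints in $\Lambda$ using the dynamics of the $\Gamma$-action.

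First, I apply Theorem \ref{tfae}: since $\Lambda$ is Ahlfors $Q$-regular with $Q > 1$ and admits a path family of positive $p$-modulus, it is $Q$-Loewner and a Lipschitz differentiability space. The Cheeger--Kleiner--Schioppa theorem (already used in the proof of Theorem \ref{tfae}) then gives, for $\mathcal{H}^Q$-a.e.\ $\xi \in \Lambda$, a tangent $(\hat\Lambda, 0) \in \Tan(\Lambda, \xi)$ carrying an Alberti representation by geodesic lines, hence containing an isometric copy of $\R$ through the basepoint.

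Next, I fix a geodesic $\gamma \subset X$ with endpoints $\xi_\pm \in \Lambda$ and work with the parabolic visual metric $d = d_{\xi_+,q}$ on $\partial X \setminus \{\xi_+\}$ for some $q \in \gamma$. Hyperbolic translations along $\gamma$ act as uniform rescalings of $d$ centered at $\xi_-$, so rescalings producing tangents of $(\Lambda, d)$ at $\xi_-$ can be realized directly by isometries of $X$. Combining this with the uniformly quasi-Möbius $\Gamma$-action on $(\partial X, d_p)$ and a Bonk--Kleiner subconvergence argument, I will realize the tangent line $\ell$ from the previous step as a pointed Hausdorff limit of subsets of $\Lambda$ at a dense subset of $\Lambda$. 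The key rigidity input to be verified is the following $\CAT(-1)$ statement: any subset of $(\partial X \setminus \{\omega\}, d_{\omega,q})$ isometric to $\R$ bounds, together with $\omega$, a totally geodesic isometric copy of $\Hy_\R^2$ in $X$. This should follow because such an isometric $\R$-line forces all four-point cross-ratios to coincide with those of the model ideal boundary of $\Hy_\R^2$, and $\CAT(-1)$ four-point rigidity will then produce the corresponding totally geodesic configuration in $X$. Applied to $\ell$ at $\omega = \xi_+$, this yields the desired $\Hy_\R^2$-plane containing $\gamma$ with boundary circle inside $\Lambda$.

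Finally, the set $S \subset \Lambda \times \Lambda$ of pairs whose joining geodesic lies in such an $\Hy_\R^2$-plane is closed and $\Gamma$-invariant, and the previous steps populate $S$ with a positive-measure set. Ergodicity of the diagonal $\Gamma$-action on $\Lambda \times \Lambda$, a standard extension of Lemma \ref{ergodic} for convex-cocompact actions, then forces $S$ to have full measure, and closedness together with the density of $\Gamma$-orbits away from the diagonal forces $S$ to contain all off-diagonal pairs. The hardest step will be the $\CAT(-1)$ boundary-to-ambient conversion described above: in the rank-one symmetric setting it is built into the Iwasawa Carnot group structure described in Section \ref{visualsec}, but for a general $\CAT(-1)$ target it requires a careful Möbius-geometric analysis, and is the source of the $\CAT(-1)$ (rather than merely Gromov-hyperbolic) hypothesis in the statement.
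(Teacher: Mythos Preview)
Your overall strategy---extract line structure in a tangent via Theorem~\ref{tfae} and the Cheeger--Kleiner--Schioppa machinery, upgrade it to a M\"obius circle in $\Lambda(\Gamma)$ via $\CAT(-1)$ rigidity on the boundary, and then propagate---is the right shape, and your ``key rigidity input'' is exactly Bourdon's \cite[Theorem~0.1]{Bou}, which the paper invokes at the very first step. But there is a genuine gap in how much you extract from the tangent, and this gap breaks your propagation step.

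You only pull from the Alberti representation a \emph{single} geodesic line through the basepoint of the tangent. The paper instead uses the full conclusion of Proposition~\ref{LQ}: the tangent map $\hat\phi$ is a metric submersion onto $(\R^k,\|\cdot\|_x)$. After identifying the tangent with a parabolic limit set $(\Lambda(\Gamma)\setminus\{\omega\},d_{\omega,q})$ via \cite[Proposition~3.1]{BKqf}, the metric submersion lives on the parabolic limit set itself, and Lemma~\ref{pathlifting} lifts a geodesic line through \emph{every} image point. Hence through the fixed $\omega$ and \emph{every} $\eta\in\Lambda(\Gamma)\setminus\{\omega\}$ there is a M\"obius circle in $\Lambda(\Gamma)$. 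Density of the $\Gamma$-orbit of $\omega$ plus closedness of M\"obius circles under nontrivial limits then handles all pairs immediately---no product ergodicity is needed.

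By contrast, your scheme produces, for a.e.\ $\xi$, one tangent with one line; after realizing this tangent as some parabolic limit set you obtain a single pair $(\omega',\eta')$ lying on a M\"obius circle in $\Lambda(\Gamma)$. The set of pairs you manufacture this way is at best a one-parameter family and need not have positive $\mathcal{H}^Q\times\mathcal{H}^Q$-measure in $\Lambda\times\Lambda$, so invoking ergodicity of the diagonal action does not get off the ground. (Ergodicity on the product is also an extra input beyond Lemma~\ref{ergodic}, though that part is indeed standard.) The fix is simple: keep the whole metric submersion rather than discarding it for a single line, and replace the ergodicity argument by the orbit-density-plus-closure argument above.
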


In the setting of Connell's theorem, the action of $\Gamma$ on $X = \tilde{M}$ is cocompact, so the conclusion of Proposition \ref{connell} holds for all geodesics in $\tilde{M}$. This means that $\tilde{M}$ has hyperbolic rank at least 1, and a powerful theorem of Hamenst\"adt then implies that $\tilde{M}$ is symmetric \cite{Ha92}.

\begin{proof}
By a theorem of Bourdon \cite[Theorem 0.1]{Bou}, it suffices to show that any two points in $\Lambda(\Gamma)$ can be joined by a M\"obius circle that lies in $\Lambda(\Gamma)$. As M\"obius circles in $\partial X$ are closed under non-trivial limits, it actually suffices to show that for a dense set of points $\omega \in \Lambda(\Gamma)$, for every $\eta \in \Lambda(\Gamma) \backslash \{\omega\}$, there is a M\"obius circle in $\Lambda(\Gamma)$ containing $\omega$ and $\eta$. Recall, though, that the boundary action $\Gamma \acts \Lambda(\Gamma)$ is M\"obius, and the orbit of every point is dense. Thus, it suffices to prove the previous statement for a single point $\omega \in \Lambda(\Gamma)$. A natural rephrasing of this statement is that every $\eta \in \Lambda(\Gamma) \backslash \{\omega\}$ is contained in an isometric copy of $\R$ in the parabolic limit set $(\Lambda(\Gamma) \backslash \{\omega\}, d_{\omega,q})$ for some $q \in X$. Let us prove this formulation.

By Theorem \ref{tfae}, we know that $\Lambda(\Gamma)$ is a Lipschitz differentiability space. A result of Cheeger--Kleiner--Schioppa, Theorem \ref{LQ} above, guarantees that $\Lambda(\Gamma)$ has a tangent $Y$ for which there is a metric submersion $Y \rightarrow (\R^k,||\cdot||)$ onto a normed space. A theorem of Bonk--Kleiner \cite[Proposition 3.1]{BKqf} shows that there are points $\omega \in \Lambda(\Gamma)$ and $q \in X$ for which $Y$ is isometrically equivalent to $(\Lambda(\Gamma) \backslash \{\omega\}, d_{\omega,q})$. Thus, there is a metric submersion 
$$f \colon (\Lambda(\Gamma) \backslash \{\omega\}, d_{\omega,q}) \rightarrow (\R^k,||\cdot||).$$
For $\eta \in \Lambda(\Gamma) \backslash \{\omega\}$, let $\ell$ be a geodesic line in $\R^k$ through $f(\eta)$, e.g., the line in the first coordinate direction. By Lemma \ref{pathlifting} below, there is a lift of $\ell$ to a geodesic line in $\Lambda(\Gamma) \backslash \{\omega\}$ that contains $\eta$, as desired.
\end{proof}

It is desirable to understand better the global geometry of $X$ that can arise in this setting, even in the case that $\Gamma \acts X$ is cocompact. One can consider Theorem \ref{mainthm} to be a description of what happens when the ambient space $X$ is not only $\CAT(-1)$ but is in fact symmetric.

\section{Tangents of Lipschitz differentiability spaces in Carnot groups}\label{carnottangent}

The main result of this section is Theorem \ref{tan}, which essentially shows that if a Lipschitz differentiability space lies inside a Carnot group, then its tangents are Carnot subgroups. Theorem \ref{tan} is an important piece in the proof of Theorem \ref{mainthm}, but also has other interesting non-embedding consequences for Lipschitz differentiability spaces, as explained in subsection \ref{nonembedding}.

Let $\G$ be a Carnot group, and let $d$ be a homogeneous distance on $\G$. Let $X \subset \G$ be a subset for which there is a Radon measure $\mu$ on $X$ for which $(X,d,\mu)$ is a Lipschitz differentiability space. Recall the definition of Carnot subgroups given in Definition \ref{carnotsubgroup}. Note that $X$ is automatically metrically doubling, as a subset of the metrically doubling space $\G$.

\begin{theorem} \label{tan}
Let $(U,\phi)$ be a $k$-dimensional chart for $X$, and assume that for almost every $x \in U$, each $(\hat{X},\hat{x}) \in \Tan(X,x)$ is rectifiably connected. Then for almost every $x \in U$, the set $\Tan(X,x)$ consists of one element, which is canonically isometric to a Carnot subgroup of $\G$ generated by a fixed $k$-dimensional subspace of the horizontal layer of $\G$.
\end{theorem}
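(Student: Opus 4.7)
The plan is to adapt Cheeger's proof of Corollary \ref{gcd} to the Carnot setting, replacing the linear coordinates of $\R^N$ with the horizontal projection $\pi \colon \G \to V_1$ from Lemma \ref{pifacts}, and replacing classical differentiation with Pansu's theorem together with horizontal lifting. Write $F \colon X \hookrightarrow \G$ for the inclusion, set $n = \dim V_1$, and consider the Lipschitz composition $\pi \circ F \colon X \to \R^n$. At almost every $x \in U$, differentiability in the chart $(U,\phi)$ together with Proposition \ref{LQ} provides a linear map $A = D(\pi\circ F)(x) \colon \R^k \to \R^n$ such that for every tangent $(\hat X, \hat x) \in \Tan(X,x)$ with associated tangent maps $\hat\phi \colon \hat X \to \R^k$ and $\hat F \colon \hat X \to \G$ (with $\hat F(\hat x) = 0$), the map $\hat\phi$ is a metric submersion onto $(\R^k, \|\cdot\|_x)$, and $\pi \circ \hat F = A \circ \hat\phi$. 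Define $V := A(\R^k) \subseteq V_1$ and let $H \subseteq \G$ be the Carnot subgroup generated by $V$.

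To show $\hat Y := \hat F(\hat X) \subseteq H$, the main tool is the unique horizontal lifting of Lemma \ref{pifacts}(\ref{pilift}). Take a Lipschitz curve $\gamma \colon [0,1] \to \hat Y$ with $\gamma(0) = 0$; since $\hat\phi$ is surjective, $\pi\circ\gamma$ takes values in $\pi(\hat Y) = A(\R^k) = V$. Applying Lemma \ref{pifacts}(\ref{pilift}) inside the Carnot group $H$ produces a horizontal lift of $\pi\circ\gamma$ in $H$ starting at $0$; this lift is also a Lipschitz curve in $\G$ with the same horizontal projection and starting point as $\gamma$, so the uniqueness of horizontal lifts in $\G$ forces $\gamma$ to lie in $H$. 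The rectifiable-connectedness hypothesis on $\hat X$ (which transfers to $\hat Y$ via bi-Lipschitz $\hat F$) then gives $\hat Y \subseteq H$. For the reverse inclusion, run the picture backwards: fix $p \in H$, join it to $0$ by a Lipschitz horizontal curve $h \colon [0,1] \to H$, let $\beta := \pi\circ h \colon [0,1] \to V$, and pick any Lipschitz curve $\alpha \colon [0,1] \to \R^k$ with $\alpha(0) = 0$ and $A\circ\alpha = \beta$. Lift $\alpha$ step-by-step through the metric submersion $\hat\phi$ (using an Arzel\`a--Ascoli argument on successively refined partitions) to obtain a Lipschitz lift $\tilde\alpha$ in $\hat X$ starting at $\hat x$. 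Then $\hat F\circ\tilde\alpha$ is a Lipschitz curve in $\hat Y$ starting at $0$ with horizontal projection $\beta$, so uniqueness of horizontal lifts forces it to equal $h$, giving $p \in \hat Y$.

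It remains to show $\dim V = k$; this will force $H$ to be independent of the choice of tangent and yield the canonical isometry statement. The composition $\psi := \hat\phi \circ \hat F^{-1} \colon H \to \R^k$ is a Lipschitz quotient. Pansu-differentiating $\psi$ at a generic point of $H$ yields a Lie group homomorphism that commutes with dilations and therefore factors through the horizontal projection of $H$ as $L \circ \pi_H$ for some linear map $L \colon V \to \R^k$. A standard uniform-convergence argument, using compactness of balls in $H$, shows that the Lipschitz quotient property is preserved under the rescalings defining the Pansu derivative, so $L \circ \pi_H$ is itself a Lipschitz quotient. Combined with the surjectivity of $\pi_H$ (Lemma \ref{pifacts}(\ref{piLQ})), this forces $L$ to be surjective, so $\dim V \geq k$. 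Since $\dim V \leq k$ is immediate from $V = A(\R^k)$, we conclude $\dim V = k$. The most technically delicate parts of this scheme are the step-by-step lifting of Lipschitz curves through the metric submersion $\hat\phi$, and the preservation of the Lipschitz quotient property under the Pansu limit; both are standard in spirit but require careful handling in this non-Euclidean setting.
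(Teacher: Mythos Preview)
Your proposal is correct and runs on the same engine as the paper's proof: the factorization $\pi\circ\hat F = A\circ\hat\phi$, the unique horizontal lifting from Lemma~\ref{pifacts}\eqref{pilift}, and the Lipschitz quotient property of $\hat\phi$ from Proposition~\ref{LQ}. Your double-inclusion argument ($\hat Y\subseteq H$ and $H\subseteq\hat Y$) is a clean reorganization of what the paper does by first verifying that $\hat X$ is closed under group multiplication, inversion, and dilation, and then showing that the resulting homogeneous subgroup coincides with the Carnot subgroup generated by $V$; the underlying curve-lifting manipulations are the same. Your Arzel\`a--Ascoli lifting of $\alpha$ through $\hat\phi$ is exactly Lemma~\ref{pathlifting}, which the paper simply cites from \cite{BJLPS99}.

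The one place where the paper is substantially more economical is the proof that $\dim V = k$, i.e., that $A$ is injective. You obtain this at the end by Pansu-differentiating $\psi=\hat\phi\circ\hat F^{-1}\colon H\to\R^k$ and arguing that the co-Lipschitz bound survives the rescaling limit; this is valid but requires the extra check you flag. The paper instead disposes of this at the outset (in Lemma~\ref{picharts}) with a two-line argument: if $A(v)=0$ for some $v\neq 0$, lift the line $t\mapsto tv$ through the Lipschitz quotient $\hat\phi$ to a non-constant Lipschitz curve $\ell$ in $\hat X\subset\G$; then $\pi\circ\ell(t)=A(tv)=0$ is constant, contradicting Lemma~\ref{pifacts}\eqref{picurve}. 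This avoids Pansu differentiation entirely and lets one assume from the start that the chart map is a sub-tuple $(\pi_{i_1},\ldots,\pi_{i_k})$ of the horizontal coordinates, which streamlines the rest.
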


\begin{remark} \label{tanrmk}
If $(X,d,\mu)$ is a PI space (i.e., is doubling and satisfies a Poincar\'e inequality in the sense of \cite{HK98}), then the assumption that each $(\hat{X},\hat{x})\in \Tan(X,x)$ is rectifiably connected can be omitted in Theorem \ref{tan}. This is because  each element of $\Tan(X,x)$ will be quasiconvex and hence rectifiably connected.

The same is true if $(X,d,\mu)$ is a RNP Lipschitz differentiability space, in the sense of \cite{BL15}. This follows from Corollary 9.4 of \cite{BL15}.
\end{remark}

The following path lifting lemma for Lipschitz quotients is taken from \cite[Lemma 4.4]{BJLPS99} (equivalently \cite[Lemma 2.2]{JLPS00}). Although stated there only for Euclidean domains, the proof works equally well in the context below. Recall that a metric space is \ti{proper} if each closed ball in the space is compact. Every complete, doubling metric space is proper.

\begin{lemma}\label{pathlifting}
Let $X$ be a proper metric space and $Y$ a metric space. Let $F \colon X\rightarrow Y$ be a Lipschitz quotient with co-Lipschitz constant $c>0$, and let $\gamma \colon [0,T] \rightarrow Y$ be a $1$-Lipschitz curve with $\gamma(0)=F(x)$. Then there is a $(1/c)$-Lipschitz curve $\tilde{\gamma} \colon [0,T] \rightarrow X$ such that $\tilde{\gamma}(0)=x$ and $F\circ \tilde{\gamma} = \gamma$. 
\end{lemma}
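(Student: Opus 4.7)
The plan is to build the lift $\tilde\gamma$ as a limit of discrete lifts, using the co-Lipschitz hypothesis to control step sizes and properness of $X$ to extract convergent subsequences. This is a fairly standard recipe for lifting paths through open or open-like surjections, adapted here to the metric setting.

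First I would fix a nested sequence of partitions $P_n = \{iT/2^n : 0\leq i\leq 2^n\}$ of $[0,T]$, so that $D=\bigcup_n P_n$ is dense. I would then inductively define discrete lifts $\tilde\gamma_n \colon P_n \to X$, starting from $\tilde\gamma_n(0)=x$. At each step, having placed $\tilde\gamma_n(t_i^n)$ above $\gamma(t_i^n)$, the $1$-Lipschitz bound on $\gamma$ gives
$$d_Y\!\bigl(\gamma(t_{i+1}^n),\,F(\tilde\gamma_n(t_i^n))\bigr)\leq s_n:=T/2^n.$$
Choosing any $r_n$ slightly larger than $s_n/c$, the co-Lipschitz condition $B(F(y),cr_n)\subset F(B(y,r_n))$ lets me pick $\tilde\gamma_n(t_{i+1}^n)\in B(\tilde\gamma_n(t_i^n),r_n)$ with $F(\tilde\gamma_n(t_{i+1}^n))=\gamma(t_{i+1}^n)$. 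Taking $r_n = (1/c+\epsilon_n)s_n$ with $\epsilon_n\to 0$ makes each $\tilde\gamma_n$ a $(1/c+\epsilon_n)$-Lipschitz map on $P_n$ by telescoping.

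Next I would use properness. Telescoping also yields $d_X(x,\tilde\gamma_n(t))\leq (1/c+\epsilon_n)T$ for all $t\in P_n$, so for $n$ large the entire image lies inside the closed ball $\overline B(x,2T/c)$, which is compact since $X$ is proper. A standard Cantor diagonal argument over the countable set $D$ then produces a subsequence $\tilde\gamma_{n_k}$ that converges pointwise on $D$ to a map $\tilde\gamma\colon D\to X$. Passing to the limit in the Lipschitz estimate gives $d_X(\tilde\gamma(s),\tilde\gamma(t))\leq (1/c)|s-t|$ for $s,t\in D$, and since $X$ is proper (hence complete), $\tilde\gamma$ extends uniquely to a $(1/c)$-Lipschitz map $\tilde\gamma\colon[0,T]\to X$.

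Finally, for the compatibility $F\circ\tilde\gamma=\gamma$: at every $t\in D$ one has $F(\tilde\gamma_{n_k}(t))=\gamma(t)$ once $t\in P_{n_k}$, and continuity of $F$ forces $F(\tilde\gamma(t))=\gamma(t)$. Since both $F\circ\tilde\gamma$ and $\gamma$ are continuous on $[0,T]$ and agree on the dense set $D$, they coincide everywhere. The initial condition $\tilde\gamma(0)=x$ holds by construction. I do not foresee a serious obstacle: the only delicate point is the bookkeeping in the diagonal extraction and the choice of $r_n$ slightly larger than $s_n/c$ so that the strict inclusion $B(F(y),cr_n)\subset F(B(y,r_n))$ actually contains $\gamma(t_{i+1}^n)$; everything else is routine.
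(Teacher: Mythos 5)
Your argument is correct and follows essentially the same route as the proof of Lemma 4.4 in the Bates--Johnson--Lindenstrauss--Preiss--Schechtman paper that the authors cite without reproducing: discrete lifts over nested dyadic partitions, with step sizes controlled by the co-Lipschitz inclusion $B(F(y),cr)\subset F(B(y,r))$, followed by a diagonal/compactness extraction that uses properness of $X$ and a passage to the limit of the $(1/c+\epsilon_n)$-Lipschitz bounds. The one genuinely delicate point --- taking $r_n$ strictly larger than $s_n/c$ so that $\gamma(t_{i+1}^n)$ lies in the \emph{open} ball $B(F(\tilde\gamma_n(t_i^n)),cr_n)$ --- is one you already address explicitly.
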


We now let $\pi \colon \G \rightarrow V_1 \simeq \R^n$ be the global differentiability chart for $\G$, and let $\pi = (\pi_1,\ldots,\pi_n)$ denote its coordinates. Recall that $(U,\phi \colon X\rightarrow\R^k)$ is a differentiability chart for the space $(X,d,\mu)$ contained in $\G$.

\begin{lemma}\label{picharts}
The set $U$ can be covered by a finite number of charts, with chart maps of the form 
$$(\pi_{i_1},\ldots, \pi_{i_k}) \colon X \rightarrow \R^k$$
for some choice of $k$ distinct indices $i_1,\ldots,i_k$.
\end{lemma}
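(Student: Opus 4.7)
The plan is to identify $A(x) := D(\pi|_X)(x) \in \Hom(\R^k,\R^n)$ as a maximal-rank linear map for $\mu$-a.e.\ $x \in U$, and then to finish with a routine chart-change argument in Lipschitz differentiability theory. Because each coordinate $\pi_i$ is Lipschitz on $\G$, the restriction $\psi := \pi|_X$ is Lipschitz, and the LDS property on the chart $(U,\phi)$ yields, for $\mu$-a.e.\ $x \in U$, a derivative $A(x)$ such that every tangent map $\hat\psi\colon \hat X \to \R^n$ factors as $\hat\psi = A(x)\circ\hat\phi$. Viewing $\hat X$ canonically as a pointed subset of $\G$, Lemma \ref{pifacts}\eqref{pitangent} (the Pansu tangent of $\pi$ is $\pi$ itself) identifies $\hat\psi$ with $\pi|_{\hat X}$, so the factorization reads $\pi|_{\hat X} = A(x)\circ\hat\phi$.

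The heart of the argument is showing that $A(x)$ is injective almost everywhere. Fix a point $x \in U$ at which both $A(x)$ exists and, by Proposition \ref{LQ}, the tangent map $\hat\phi\colon \hat X \to \R^k$ is a Lipschitz quotient. Given any $v \in \R^k \setminus \{0\}$, I lift the straight line $t \mapsto tv$ through $\hat\phi$ via the path-lifting Lemma \ref{pathlifting}, producing a non-constant Lipschitz curve $\tilde\gamma_v\colon [0,T] \to \hat X \subset \G$ with $\hat\phi\circ\tilde\gamma_v(t) = tv$. Lemma \ref{pifacts}\eqref{picurve} then forces $\pi\circ\tilde\gamma_v$ to be non-constant; but the factorization above gives $\pi(\tilde\gamma_v(t)) = A(x)(tv) = t\,A(x)v$, so $A(x)v \neq 0$. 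Since $v$ was arbitrary, $A(x)$ is injective and has rank $k$. I expect this to be the main substantive step: it is precisely where the Carnot-group structure of the ambient $\G$ enters, through the ``horizontal non-degeneracy'' of curves expressed in Lemma \ref{pifacts}\eqref{picurve}.

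To conclude, for each $k$-element subset $I = \{i_1,\ldots,i_k\} \subset \{1,\ldots,n\}$, set $U_I := \{x \in U : \det A_I(x) \neq 0\}$, where $A_I(x)$ denotes the $k\times k$ submatrix of $A(x)$ with rows indexed by $I$. These sets are Borel, and the injectivity just established shows that $\bigcup_I U_I$ covers $U$ modulo a $\mu$-null set, which may be absorbed into any single piece. On $U_I$, the derivative of $\pi_I|_X := (\pi_{i_1},\ldots,\pi_{i_k})$ with respect to the chart $(U,\phi)$ is the invertible linear map $A_I(x)$; inverting the factorization shows that any Lipschitz function $f\colon X \to \R$ that is differentiable at $x$ with respect to $\phi$ is also differentiable with respect to $\pi_I|_X$, with derivative $df(x)\cdot A_I(x)^{-1}$. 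This is the standard LDS chart-change observation, and it promotes $(U_I, \pi_I|_X)$ to a $k$-dimensional chart, producing the desired cover of $U$ by at most $\binom{n}{k}$ charts of the prescribed form.
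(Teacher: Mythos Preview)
Your argument is correct and follows essentially the same route as the paper's proof: differentiate $\pi|_X$ with respect to $(U,\phi)$, prove the derivative is injective by lifting lines through the Lipschitz-quotient tangent map $\hat\phi$ and invoking Lemma~\ref{pifacts}\eqref{picurve}, and then decompose $U$ according to which $k\times k$ minor is invertible and perform the linear chart change. Your $A(x)$ is the paper's $D\iota(x)$, and the remaining differences are purely notational.
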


\begin{proof}
Let $\iota \colon X \hookrightarrow \G$ be the inclusion map, which is an isometry. Post-composing with $\pi \colon \G \rightarrow \R^n$ gives a Lipschitz map from $X$ to $\R^n$, which can be differentiated with respect to the chart $\phi \colon X \rightarrow \R^k$ for $U$. Thus, for almost every $x \in U$, there is a unique linear map
$$D\iota(x) \colon \R^k \rightarrow \R^n$$
for which
\begin{equation}\label{piderivative}
\pi(y)-\pi(x) = D\iota(x)(\phi(y) - \phi(x)) + o(d(x,y)), \hspace{0.3cm} y \in X.
\end{equation}
For almost every $x \in U$, if $(\hat{X},\hat{x}) \in \Tan(X,x)$, then the corresponding tangent maps $\hat{\phi} \colon \hat{X} \rightarrow \R^k$ and $\hat{\iota} \colon \hat{X} \rightarrow \G$ satisfy
$$\pi \circ \hat{\iota} = D\iota(x) \circ \hat{\phi}.$$
Here, we consider $\hat{X}$ as a subset of $\G$, so that $\hat{\iota}$ is just the inclusion map again. Moreover, $\hat{\phi} \colon \hat{X} \rightarrow \R^k$ is a Lipschitz quotient mapping.

Consider any non-zero vector $v\in\R^k$. By Lemma \ref{pathlifting}, there is a Lipschitz curve $\ell \colon [0,1] \rightarrow \hat{X} \subset \G$ with
$$\hat{\phi} \circ \ell(t) = tv.$$
In particular, we have
$$\pi \circ \ell (t) = D\iota(x)(tv) = t \cdot D\iota(x)(v).$$
Since $\ell$ is a non-constant curve in $\G$, it must be that $\pi\circ \ell$ is non-constant (Lemma \ref{pifacts}\eqref{picurve}). Hence, $D\iota(x)(v) \neq 0$. As $v$ was arbitrary, we conclude that $D\iota(x)$ is injective, and in particular that $k \leq n$.

Given $i_1 < \dots < i_k\in\{1, \dots, n\}$, let $U_{i_1,\ldots,i_k}$ denote the subset of $U$ on which the $k\times k$ minor of $(D\iota)(x)$ defined by the coordinates $i_1, \dots, i_k$ is invertible. Note that almost every $x\in U$ is in some such set, since $(D\iota)(x)$ is injective for almost every $x\in U$. For $x\in U_{i_1, \dots, i_k}$, let $A(x) \colon \R^n\rightarrow\R^k$ be the unique linear mapping with 
\begin{equation}\label{Akernel}
 \text{ker}(A)= \text{span}(\{e_1, \dots, e_n\} \setminus \{e_{i_1}, \dots, e_{i_k}\})
\end{equation}
such that 
\begin{equation}\label{Ainverse}
A(x) \cdot (D\iota)(x) = \text{Id}_{k}.
\end{equation}
Applying \eqref{Ainverse} to equation \eqref{piderivative}, we see that
\begin{equation}\label{Aphipi}
\phi(y) - \phi(x) = A(x) \cdot (\pi(y) - \pi(x)) + o(d(x,y)), \hspace{0.3cm} y \in X.
\end{equation}
From \eqref{Aphipi} and \eqref{Akernel}, it follows that a Lipschitz function $f\colon X\rightarrow\R$ that is differentiable with respect to $\phi$ at $x\in  U_{i_1, \dots, i_k}$, with unique derivative, is differentiable with respect to the restrictions $(\pi_{i_1}|_X, \ldots, \pi_{i_k}|_X)$ with unique derivative.

Hence, these form a $k$-dimensional chart map for $U_{i_1, \dots, i_k}$.
\end{proof}

We are now ready to prove Theorem \ref{tan}.

\begin{proof}[\textbf{Proof of Theorem \ref{tan}}]

Using the previous lemma, and passing to subsets, we may assume without loss of generality that $U$ has chart map
$$(\pi_1,\ldots,\pi_k) \colon X \rightarrow \R^k \subset \R^n.$$
For almost every $x \in U$, we know that the other coordinates $\pi_{k+1},\ldots,\pi_n$ are linear combinations of $\pi_1,\ldots,\pi_k$, up to first order on $X$ near $x$. Fix such a point $x$, and assume also that every $(\hat{X},\hat{x})\in \Tan(X,x)$ is rectifiably connected and all tangent maps $(\hat{\pi}_1,\ldots,\hat{\pi}_k) \colon \hat{X} \rightarrow \R^k$ are Lipschitz quotients. By assumption, such points form a set of full measure in $U$.

Fix $(\hat{X}, \hat{x}) \in \Tan(X,x)$. We canonically identify $\hat{X}$ with a closed subset of $\G$ that is a limit of rescalings of $L_{-x}(X)$, with $\hat{x}=0$ and $\hat{\pi}_i = \pi_i$. Moreover, as the coordinates $\pi_{k+1},\ldots,\pi_n$ were linear combinations of $\pi_1,\ldots,\pi_k$, up to first order on $X$ near $x$, we see that $\pi_{k+1}|_{\hat{X}},\ldots,\pi_n|_{\hat{X}}$ are precisely linear combinations of $\pi_1|_{\hat{X}},\ldots,\pi_k|_{\hat{X}}$ on $\hat{X}$. In particular, there is an injective linear transformation $A \colon \R^k \rightarrow \R^n$ for which
$$p := \pi |_{\hat{X}} = A \circ (\pi_1, \ldots, \pi_k).$$
Let $V = p(\hat{X}) = A(\R^k)$ be the corresponding linear subspace of $\R^n \simeq V_1$. It is a $k$-dimensional subspace because $(\pi_1,\ldots, \pi_k) \colon \hat{X} \rightarrow \R^k$ is surjective by the Lipschitz quotient property and $A$ is injective. Furthermore, the map $p\colon \hat{X}\rightarrow V$ is a Lipschitz quotient.

We claim that $\hat{X}$ is the Carnot subgroup of $\G$ generated by $V \subset \R^n$. First, we show that it is a subgroup. Let $y,z \in \hat{X}$, and let $\gamma$ be a rectifiable curve in $\hat{X}$ from $0$ to $z$, so that $p \circ \gamma$ is a rectifiable curve in $V$ from $0$ to $p(z)$. Consider the rectifiable curve $L_y \circ \gamma$, which joins $y$ to $yz$ in $\G$. As
$$\pi \circ L_y \circ \gamma = L_{\pi(y)} \circ \pi \circ \gamma = L_{p(y)} \circ p \circ \gamma,$$
we see that $L_y \circ \gamma$ is the unique horizontal lift through $\pi$ of the curve $L_{p(y)} \circ p \circ \gamma$ to $\G$, starting at $y$ (recall the uniqueness in Lemma \ref{pifacts}\eqref{pilift}). Note that $L_{p(y)} \circ p \circ \gamma$ is a rectifiable curve in $V$, as $L_{p(y)}$ preserves this subspace. Applying Lemma \ref{pathlifting} to the Lipschitz quotient $p \colon \hat{X} \rightarrow V$, we obtain a lift of $L_{p(y)} \circ p \circ \gamma$ to a rectifiable curve in $\hat{X}$ beginning at $y$. This lift through $p$ is also a lift through $\pi$, and by uniqueness of horizontal lifts to $\G$, the two lifts must coincide. Hence, $L_y \circ \gamma$ is contained in $\hat{X}$; in particular, its endpoint $yz$ is in $\hat{X}$.

Similar arguments show that $\hat{X}$ is closed under group inversion and dilations. If $y \in \hat{X}$, let $\gamma$ be a rectifiable curve from $0$ to $y$ in $\hat{X}$. Then $\gamma^{-1}$ is also a rectifiable curve in $\G$, and 
$$\pi \circ \gamma^{-1} = - \pi \circ \gamma = -p \circ \gamma$$
is a rectifiable curve in $V$. We see that $\gamma^{-1}$ is the unique horizontal lift of $-p \circ \gamma$ to $\G$ which begins at $0$. The Lipschitz quotient property for $p$ ensures that such a lift must exist in $\hat{X}$, so in fact $\gamma^{-1}$ lies in $\hat{X}$. This gives $y^{-1} \in \hat{X}$. Finally, if $\delta_{\lambda}$ is a dilation for $\G$, then $\delta_{\lambda} \circ \gamma$ is again a horizontal curve in $\G$. As
$$\pi \circ \delta_{\lambda} \circ \gamma = \lambda \cdot \pi \circ \gamma = \lambda \cdot p \circ \gamma$$
is a rectifiable curve in $V$, its unique horizontal lift to $\G$ that starts at $0$ is $\delta_{\lambda} \circ \gamma$. By the Lipschitz quotient property for $p$, this lift must be in $\hat{X}$, so $\delta_{\lambda}(y) \in \hat{X}$.

Thus we find that $\hat{X}=H$ is a homogeneous subgroup of $\G$ that is rectifiably connected. In particular, it is a closed Lie subgroup with Lie algebra $\mathfrak{h} \subset \mathfrak{g}$ for which $H = \exp_{\G}(\mathfrak{h})$. Moreover, the $k$-dimensional subspace $V = p(H) \subset V_1$ that we found earlier is precisely $\mathfrak{h} \cap V_1$. Let $\mathfrak{h}^* \subset \mathfrak{h}$ denote the stratified Lie algebra generated by $V$, and let $H^* = \exp_{\G}(\mathfrak{h}^*) \subset H$ be the corresponding Carnot subgroup of $\G$. 

It remains only to show that $H^* = H$. To see this, let $y \in H$ be arbitrary, and let $\gamma$ be a rectifiable curve in $H$ from $0$ to $y$. Then $p \circ \gamma$ is a rectifiable curve in $V$ from $0$ to $p(z)$, and its unique horizontal lift to $\G$, starting at $0$, is $\gamma$. At the same time, $p \circ \gamma$ is a rectifiable curve in the horizontal layer of the Carnot subgroup $H^*$, so it has a horizontal lift to $H^*$ through $p$ that starts at $0$. Hence $y \in H^*$, and we conclude that $H^* = H$.
\end{proof}

The following is an immediate result of the theorem above.

\begin{corollary}\label{embed}
Let $(U,\phi)$ be a $k$-dimensional chart in a complete, metrically doubling Lipschitz differentiability space $(X,d,\mu)$, and assume that for almost every $x \in U$, each $\hat{X} \in \Tan(X,x)$ is rectifiably connected. 

Suppose that $F \colon X \rightarrow \G$ is a bi-Lipschitz embedding. Then for $\mu$-almost every $x \in U$, $\Tan(F(X), F(x))$ consists of one element, which is canonically isometric to a Carnot subgroup of $\G$ generated by a $k$-dimensional vector subspace of the horizontal layer of $\G$.

In particular, for $\mu$-almost every $x\in U$, every element in $\Tan(X,x)$ is bi-Lipschitz equivalent to a sub-Riemannian Carnot group whose horizontal layer has dimension $k$.
\end{corollary}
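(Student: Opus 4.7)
The plan is to transfer the Lipschitz differentiability structure from $X$ to the subset $F(X) \subset \G$ and then apply Theorem \ref{tan} directly to $F(X)$, finally pulling the conclusion back through $F$.

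More precisely, since $F \colon X \to F(X)$ is a bi-Lipschitz homeomorphism, the metric measure space $(F(X), d_\G|_{F(X)}, F_*\mu)$ is complete, metrically doubling, and is itself a Lipschitz differentiability space with $k$-dimensional chart $(F(U), \phi \circ F^{-1})$. Moreover, a bi-Lipschitz homeomorphism induces bi-Lipschitz identifications between $\Tan(X,x)$ and $\Tan(F(X),F(x))$: any sequence of rescalings of $X$ about $x$ maps under $F$ to a comparable sequence of rescalings of $F(X)$ about $F(x)$, with uniform bi-Lipschitz control coming from $F$. Since bi-Lipschitz homeomorphisms preserve rectifiable connectedness, the assumption on tangents of $X$ transfers to the corresponding assumption on tangents of $F(X)$.

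I would then apply Theorem \ref{tan} to $(F(X), d_\G|_{F(X)}, F_*\mu)$ with its chart $(F(U), \phi \circ F^{-1})$. This yields a set of full $F_*\mu$-measure in $F(U)$, which pulls back under $F$ to a set of full $\mu$-measure in $U$, on whose image $\Tan(F(X), F(x))$ consists of a single element that is canonically isometric to a Carnot subgroup of $\G$ generated by a fixed $k$-dimensional subspace of the horizontal layer. This is precisely the first assertion.

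For the ``in particular'' clause, fix such an $x$ and let $\hat X \in \Tan(X,x)$. Passing to a subsequence of the defining rescalings, the bi-Lipschitz map $F$ yields a tangent map $\hat F \colon \hat X \to \widehat{F(X)}$ which is itself bi-Lipschitz onto a tangent of $F(X)$ at $F(x)$. By the first part, $\widehat{F(X)}$ is isometric to a Carnot subgroup $H \subset \G$ whose horizontal layer is $k$-dimensional, equipped with the restriction of the homogeneous distance $d$. Since $H$ is itself a Carnot group, Remark \ref{homogbilip} shows that this restricted distance is bi-Lipschitz equivalent to the sub-Riemannian Carnot--Carath\'eodory distance on $H$. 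Composing the two bi-Lipschitz equivalences gives the conclusion. The only content beyond bookkeeping is Theorem \ref{tan}; the main subtlety to keep straight is the distinction between ``canonically isometric'' (for $\Tan(F(X),F(x))$, inside the ambient $\G$) and merely ``bi-Lipschitz equivalent'' (for $\Tan(X,x)$, where $F$ introduces bi-Lipschitz distortion).
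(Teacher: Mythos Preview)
Your proposal is correct and is exactly the argument the paper has in mind: it states the corollary as ``an immediate result of the theorem above'' with no further proof, and your reduction---push the differentiability structure forward along the bi-Lipschitz map $F$, apply Theorem~\ref{tan} to $F(X)\subset\G$, then pull back---is the intended immediate deduction. Your handling of the ``in particular'' clause (using that the restriction of $d$ to a Carnot subgroup $H$ is a homogeneous distance on $H$, hence bi-Lipschitz to its Carnot--Carath\'eodory metric by Remark~\ref{homogbilip}) is also correct.
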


\subsection{Non-embedding consequences}\label{nonembedding}

One interesting consequence of Cheeger's initial study of differentiability in metric spaces was a certain generalized non-embedding result for Euclidean targets. The following statement was proven for PI spaces in \cite[Theorems 14.1 and 14.2]{Ch99} and was generalized to Lipschitz differentiability spaces in \cite[Corollary 8.3]{GCD15} as a corollary of Proposition \ref{LQ} above.

\begin{theorem}\label{nonembedRn}
Let $(X,d,\mu)$ be a complete, metrically doubling Lipschitz differentiability space with an $k$-dimensional chart $(U,\phi)$. 

Suppose there exists a set $A \subseteq U$ with $\mu(A)>0$ such that for every $a\in A$, there exists $(Y,y)\in \Tan(X,a)$ that is \textit{not} bi-Lipschitz equivalent to $\mathbb{R}^k$. 

Then $X$ does not admit a bi-Lipschitz embedding into any Euclidean space.
\end{theorem}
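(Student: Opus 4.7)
The plan is to argue by contrapositive, reducing everything to Corollary \ref{gcd}. Suppose, for contradiction, that there is a bi-Lipschitz embedding $F\colon X\to\R^N$ into some Euclidean space. By Corollary \ref{gcd}, for $\mu$-almost every $x\in U$, the set $\Tan(F(X),F(x))$ consists of exactly one element, canonically isometric to a fixed $k$-dimensional linear subspace of $\R^N$, and in particular isometric to $\R^k$.

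Next, I would use the standard fact that bi-Lipschitz maps between doubling pointed spaces pass to tangents while preserving the bi-Lipschitz constants. Concretely, fix any point $x\in U$ at which the conclusion of Corollary \ref{gcd} holds, and fix any $(\hat X,\hat x)\in\Tan(X,x)$, realized as a pointed Gromov--Hausdorff limit of a sequence $(X,\lambda_j d,x)$. The rescaled Lipschitz maps
\[
F\colon (X,\lambda_j d,x)\longrightarrow (\R^N,\lambda_j|\cdot|,F(x))
\]
have uniformly bounded Lipschitz constant, so by the Arzelà--Ascoli theorem for equi-Lipschitz maps between doubling pointed spaces, after passing to a further subsequence they converge to a Lipschitz map $\hat F\colon \hat X\to \hat Y$ to some $(\hat Y,\hat y)\in\Tan(F(X),F(x))$. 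Because $F$ was bi-Lipschitz, each rescaled map is bi-Lipschitz with the same constants, and this property is preserved in the limit, so $\hat F$ is a bi-Lipschitz homeomorphism onto $\hat Y$. By the first paragraph, $\hat Y$ is isometric to $\R^k$, hence $\hat X$ is bi-Lipschitz equivalent to $\R^k$.

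Combining these steps, for $\mu$-almost every $x\in U$ \emph{every} element of $\Tan(X,x)$ is bi-Lipschitz equivalent to $\R^k$. Since $\mu(A)>0$, we may choose some $a\in A$ at which this conclusion holds; but by hypothesis, $\Tan(X,a)$ contains an element that is not bi-Lipschitz equivalent to $\R^k$, a contradiction. The only technical point is the passage of bi-Lipschitz embeddings to tangents in the pointed Gromov--Hausdorff sense; this is routine in the doubling setting, so I do not expect a significant obstacle in executing the argument.
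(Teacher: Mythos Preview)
Your argument is correct and is precisely the approach the paper indicates: the theorem is cited there as a corollary of Proposition~\ref{LQ} (equivalently, of Corollary~\ref{gcd}), and your contrapositive deduction---applying Corollary~\ref{gcd} and then passing the bi-Lipschitz embedding to tangents---is exactly how that corollary yields the statement. The only point you flag as ``technical,'' the persistence of bi-Lipschitz bounds under pointed Gromov--Hausdorff limits in the doubling setting, is indeed routine and already implicit in the paper's proof of Corollary~\ref{gcd}.
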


In \cite[Theorem 1.6]{CK09}, Cheeger and Kleiner used differentiability to prove a non-embedding result for PI spaces into certain infinite-dimensional Banach spaces.

Our work above allows us to obtain a result similar to Theorem \ref{nonembedRn} for Carnot group targets. Namely, a direct consequence of Corollary \ref{embed} is the following general non-embedding result.

\begin{corollary}\label{nonembed}
Let $(X,d,\mu)$ be a complete, metrically doubling Lipschitz differentiability space with a $k$-dimensional chart $(U,\phi)$, and assume that for almost every $x \in U$, each $\hat{X} \in \Tan(X,x)$ is rectifiably connected.

Suppose there is a set $A\subseteq U$ with $\mu(A)>0$ such that, for every $a\in A$, there exists $(Y,y)\in \Tan(X,a)$ that is \textit{not} bi-Lipschitz equivalent to a Carnot group with a $k$-dimensional horizontal layer.
 
Then $X$ does not admit a bi-Lipschitz embedding into any Carnot group.
\end{corollary}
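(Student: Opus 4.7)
The plan is to deduce Corollary \ref{nonembed} directly from Corollary \ref{embed} by contradiction, since Corollary \ref{embed} already provides the almost-everywhere tangent rigidity needed, and Corollary \ref{nonembed} is essentially its contrapositive formulation.

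Suppose toward a contradiction that $X$ admits a bi-Lipschitz embedding $F \colon X \to \G$ into some Carnot group $\G$. The hypotheses of Corollary \ref{embed} are exactly matched: $(X,d,\mu)$ is a complete, metrically doubling Lipschitz differentiability space, $(U,\phi)$ is a $k$-dimensional chart, and almost every tangent at points of $U$ is rectifiably connected. Applying Corollary \ref{embed} thus yields a set $N \subseteq U$ of full $\mu$-measure such that for every $x \in U \setminus N$, every element of $\Tan(X,x)$ is bi-Lipschitz equivalent to a sub-Riemannian Carnot group whose horizontal layer has dimension exactly $k$.

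On the other hand, the hypothesis of Corollary \ref{nonembed} supplies a set $A \subseteq U$ with $\mu(A) > 0$ such that each $a \in A$ has at least one tangent $(Y,y) \in \Tan(X,a)$ that is \emph{not} bi-Lipschitz equivalent to any Carnot group with $k$-dimensional horizontal layer. Since $\mu(A) > 0$ and $\mu(U \setminus N) = 0$, the intersection $A \cap (U \setminus N)$ is nonempty; pick any $a$ in this intersection. Then $a$ simultaneously satisfies the conclusion of Corollary \ref{embed} (every tangent is bi-Lipschitz equivalent to a Carnot group with $k$-dimensional horizontal layer) and the defining property of $A$ (some tangent is not). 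This is the desired contradiction, so no bi-Lipschitz embedding $F \colon X \to \G$ can exist, for any Carnot group $\G$.

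There is essentially no obstacle here: all the analytic content, including the path-lifting argument, the Lipschitz quotient property of tangent maps from Proposition \ref{LQ}, and the identification of tangents with Carnot subgroups, has already been carried out in Theorem \ref{tan} and packaged in Corollary \ref{embed}. The only minor points to verify are that the hypotheses transfer cleanly (in particular, that the rectifiable connectedness assumption on tangents of $X$ is preserved under the bi-Lipschitz embedding, which is immediate since bi-Lipschitz maps commute with tangent formation and preserve rectifiable connectedness) and that one handles the null set carefully when intersecting with the positive-measure set $A$.
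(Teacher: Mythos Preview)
Your approach is exactly what the paper intends: the paper states that Corollary \ref{nonembed} is ``a direct consequence of Corollary \ref{embed}'' and gives no further argument, and your contrapositive deduction from Corollary \ref{embed} is precisely that direct consequence.

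There is, however, a notational slip you should fix. You declare $N \subseteq U$ to be a set of \emph{full} $\mu$-measure, but then assert that the good tangent property holds for $x \in U \setminus N$ and that $\mu(U \setminus N) = 0$; from $\mu(A)>0$ and $\mu(U\setminus N)=0$ you cannot conclude that $A \cap (U\setminus N)$ is nonempty. Either let $N$ denote the exceptional null set (so the good property holds on $U\setminus N$, $\mu(N)=0$, and then $A\setminus N$ is nonempty), or keep $N$ as the full-measure good set and intersect $A$ with $N$ rather than with $U\setminus N$. Once this is straightened out, the argument is complete.
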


A non-embedding result in the same spirit (but allowing more general targets) for a class of one-dimensional Lipschitz differentiability spaces already appears in \cite[Corollary 9.3]{CKS15}.

\section{Rigidity for convex-cocompact actions of minimal Hausdorff dimension}\label{mainproof}
This section is devoted to the proof of Theorem \ref{mainthm}. It will be convenient to begin with the following lemma, which is essentially due to Bourdon, as part of the argument for Theorem \ref{Bouthm}.

\begin{lemma}\label{bourdonlemma}
Let $\Gamma \acts X$ be a convex-cocompact action of a discrete group $\Gamma$ on a $\CAT(-1)$ space $X$. Suppose that the limit set $\Lambda(\Gamma)$ is bi-Lipschitz equivalent to the boundary of a non-compact rank-one symmetric space $S \neq \Hy_\R^2$. Then there is an isometric embedding $F \colon S \rightarrow X$ such that $F(\partial S) = \Lambda(\Gamma)$, and $\Gamma$ stabilizes $F(S)$, acting on it with compact quotient.
\end{lemma}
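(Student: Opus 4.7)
The plan is to reduce to Bourdon's Theorem~0.3 in \cite{Bou} (the $S \neq \Hy^2_\R$ case of Theorem~\ref{Bouthm}) by first showing that $\Gamma$ is virtually a uniform lattice in $\Isom(S)$. The starting point is to transport the M\"obius boundary action $\Gamma \acts \Lambda(\Gamma)$ through the given bi-Lipschitz map $f \colon \Lambda(\Gamma) \to \partial S$, obtaining an action $\sigma \colon \Gamma \to \mathrm{Homeo}(\partial S)$. Since M\"obius maps are $1$-quasi-M\"obius and bi-Lipschitz conjugation distorts cross-ratios by at most a multiplicative constant, $\sigma$ is uniformly quasi-M\"obius with a linear distortion function, and hence uniformly quasiconformal. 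Convex-cocompactness of $\Gamma \acts X$ further ensures that $\sigma$ is cocompact on the space of ordered triples of distinct points in $\partial S$.

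The main obstacle is to upgrade $\sigma$ to a genuine M\"obius action. I would handle this in two cases based on the type of $S$. When $S$ is one of $\Hy^n_\C$, $\Hy^n_\Q$, or $\Hy^2_\O$, Pansu's rigidity theorem \cite{Pa89} implies that every quasiconformal self-homeomorphism of $\partial S$ is the boundary extension of an isometry of $S$, so each $\sigma(\gamma)$ is automatically M\"obius. When $S = \Hy^n_\R$ with $n \geq 3$ (so that $\partial S = \Sp^{n-1}$ has dimension $\geq 2$), the classical theorem of Tukia on uniformly quasiconformal group actions on Euclidean spheres produces a quasiconformal self-homeomorphism of $\Sp^{n-1}$ conjugating $\sigma$ to a M\"obius action. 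In either case, possibly after passing to a finite-index subgroup $\Gamma_0 \leq \Gamma$ to dispose of torsion, $\Gamma_0$ is realized as a discrete, cocompact subgroup of $\Isom(S)$, i.e., a uniform lattice.

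To conclude, bi-Lipschitz invariance of Hausdorff dimension yields $\Hdim(\Lambda(\Gamma_0)) = \Hdim(\Lambda(\Gamma)) = \Hdim(\partial S)$, so the equality clause of Theorem~\ref{Bouthm} applies to the convex-cocompact action $\Gamma_0 \acts X$ and produces an isometric embedding $F \colon S \to X$ with $F(\partial S) = \Lambda(\Gamma_0) = \Lambda(\Gamma)$ such that $\Gamma_0$ stabilizes $F(S)$ with compact quotient. Since $F(S)$ is the convex hull of $\Lambda(\Gamma)$ in $X$ and the full group $\Gamma$ preserves $\Lambda(\Gamma)$, it must also stabilize $F(S)$; cocompactness of the $\Gamma$-action on $F(S)$ then follows from finiteness of $[\Gamma : \Gamma_0]$.
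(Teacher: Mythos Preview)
Your overall strategy matches the paper's: conjugate the M\"obius action $\Gamma \acts \Lambda(\Gamma)$ through the bi-Lipschitz map to a uniformly quasi-M\"obius action on $\partial S$, then upgrade it to a genuine M\"obius action case-by-case according to the type of $S$, and finally invoke Bourdon. However, your case division contains a genuine error.

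\textbf{The complex hyperbolic case is misplaced.} Pansu's rigidity theorem \cite[Corollary 11.2]{Pa89} asserts that every quasiconformal self-map of $\partial S$ is already conformal (hence M\"obius) only when $S$ is quaternionic or octonionic hyperbolic. It is simply false for $S = \Hy^n_\C$: the boundary is a Heisenberg group, and there is an abundance of quasiconformal maps that are not M\"obius (for instance, generic contact diffeomorphisms). So your argument that ``each $\sigma(\gamma)$ is automatically M\"obius'' breaks down when $S = \Hy^n_\C$. The paper groups the complex hyperbolic case with the real hyperbolic one and uses Chow's theorem \cite[Theorem 2]{Ch96}, the complex-hyperbolic analog of Sullivan--Tukia, to conjugate the uniformly quasiconformal action to a M\"obius one. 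Both Tukia's and Chow's theorems require a dynamical hypothesis (every boundary point is a radial limit point), which the paper deduces from cocompactness on triples; you should make that explicit as well.

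\textbf{A difference in the endgame.} Once an equivariant quasisymmetric $f\colon \partial S \to \Lambda(\Gamma)$ is in hand, the paper appeals directly to Bourdon: Section~2 of \cite{Bou} shows such an $f$ is in fact M\"obius, and \cite[Theorem 0.1]{Bou} extends any M\"obius embedding $\partial S \hookrightarrow \partial X$ to an isometric embedding $S \hookrightarrow X$. You instead argue that $\Gamma$ (or a finite-index subgroup) becomes a uniform lattice in $\Isom(S)$ and then apply the equality case of Theorem~\ref{Bouthm}. That route can be made to work, but it needs more than you have written: you must justify that the resulting isometric action on $S$ is discrete and cocompact (both follow from proper discontinuity and cocompactness on triples, but this should be said), and the passage to a finite-index subgroup is to handle a possible finite kernel of $\Gamma \to \Isom(S)$ rather than torsion per se. The paper's direct route via \cite[Theorem 0.1]{Bou} avoids these complications.
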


\begin{proof}
Fix visual metrics on $\partial X$ and $\partial S$ once and for all. The bi-Lipschitz equivalence implies that $\dim_H(\Lambda(\Gamma)) = \dim_H(\partial S)$ and that the M\"obius action $\Gamma \acts \Lambda(\Gamma)$ can be conjugated to a uniformly quasi-M\"obius action $\Gamma \acts \partial S$.

If $S$ is of quaternionic or octonionic type, then a theorem of Pansu \cite[Corollary 11.2]{Pa89} ensures that the action $\Gamma \acts \partial S$ is by $1$-quasiconformal mappings, hence mappings that extend to isometries of $S$ by \cite[Theorem 11.5]{Pa89}, and hence M\"obius mappings by \cite{Bou95}.

If $S$ is real or complex hyperbolic, then theorems of Sullivan--Tukia \cite[Theorem G]{Tu86} and Chow \cite[Theorem 2]{Ch96} show that there is a quasi-M\"obius homeomorphism of $\partial S$ that conjugates the quasi-M\"obius action $\Gamma \acts \partial S$ to a M\"obius action. To be more precise, here we use that every point in $\partial S$ is a ``radial limit point" for the the quasi-M\"obius action $\Gamma \acts \partial S$. This follows from the fact that this action is cocompact on triples, which in turn follows from the standard fact that the boundary action $\Gamma \acts \Lambda(\Gamma)$ is cocompact on triples. Also, in the complex hyperbolic case, we again turn to \cite[Theorem 11.5]{Pa89} and \cite{Bou95} to argue that conformal mappings are M\"obius.

In any case, we can find a quasisymmetric homeomorphism $f \colon \partial S \rightarrow \Lambda(\Gamma)$ that is equivariant with respect to M\"obius actions $\Gamma \acts \partial S$ and $\Gamma \acts \Lambda(\Gamma)$. 

From here, Bourdon's work applies directly. In Section 2 of \cite{Bou}, he shows that such a homeomorphism $f$ is in fact a M\"obius homeomorphism. In addition, he shows in \cite[Theorem 0.1]{Bou} that any M\"obius embedding $f\colon \partial S \rightarrow \partial X$ extends to an isometric embedding $F \colon S \rightarrow X$. That $\Gamma \acts X$ stabilizes $F(S)$ follows immediately from the fact that $F(S)$ is the union of geodesics in $X$ whose endpoints are both in $\Lambda(\Gamma)$. That $F(S)/\Gamma$ is compact follows from the assumption that $\Gamma \acts X$ is convex-cocompact.
\end{proof}

\begin{proof}[\textbf{Proof of Theorem \ref{mainthm}}]
The inequality was already discussed in Remark \ref{tfaermk}, as was the equality case when $Q \leq 1$. Thus, we may assume that equality holds and $Q > 1$. It follows from Theorem \ref{tfae} that $\Lambda(\Gamma)$, equipped with $Q$-dimensional Hausdorff measure, is a Lipschitz differentiability space. In fact, we know that $\Lambda(\Gamma)$ supports a Poincar\'e inequality, which means that it is a quasiconvex metric space (cf. Remark \ref{tanrmk}).

Let $\G$ be the Iwasawa group that locally models the geometry of $(\partial X,d_p)$. By Lemma \ref{paralemma}, for any choice of $q \in X$ and $\omega \in \partial X$, the punctured boundary $\partial X \backslash \{\omega\}$ is identified with $\G$ in such a way that the parabolic visual metric $d_{\omega,q}$ is a homogeneous distance on $\G$.

Choose a point $z \in \Lambda(\Gamma) \subset \partial X$ for which every tangent of every element of $\Tan(\Lambda(\Gamma),d_p,z)$ is also in $\Tan(\Lambda(\Gamma),d_p,z)$. By \cite[Theorem 1.1]{LDtan} almost every $z \in \Lambda(\Gamma)$, with respect to Hausdorff $Q$-measure, has this property. Take a tangent of $(\Lambda(\Gamma),d_p)$ at $z$. By \cite[Proposition 3.1]{BKqf}, there are points $q' \in X$ and $\omega' \in \Lambda(\Gamma)$ for which this tangent is isometric to the parabolic limit set $(\Lambda(\Gamma) \backslash \{\omega'\}, d_{\omega',q'})$. As $(\Lambda(\Gamma),d_p)$ is quasiconvex, its tangents are as well.

This parabolic limit set is a subset of the parabolic boundary $(\partial X \backslash \{\omega'\}, d_{\omega',q'})$, which is identified with the Iwasawa group $\G$. The metrics $d_{\omega',q'}$ and $d_p$ are locally bi-Lipschitz equivalent on $\partial X \backslash \{\omega'\}$ and, hence, also on $\Lambda(\Gamma) \backslash \{\omega'\}$. In particular, this means that $(\Lambda(\Gamma) \backslash \{\omega'\}, d_{\omega',q'})$ is a Lipschitz differentiability space when equipped with its Hausdorff $Q$-measure. By Theorem \ref{tan}, this parabolic limit set has a tangent that is isometric to a Carnot subgroup $\N \subset \G$.

By the way we chose $z \in \Lambda(\Gamma)$, we see that $(\N,d_{\omega',q'})$ is actually a tangent of the full limit set $(\Lambda(\Gamma),d_p)$. Once again, appealing to \cite[Proposition 3.1]{BKqf}, this means that there are points $q \in X$ and $\omega \in \Lambda(\Gamma)$ for which $(\N,d_{\omega',q'})$ is isometric to the parabolic limit set $(\Lambda(\Gamma) \backslash \{\omega\}, d_{\omega,q})$. Fix an identification of $\partial X \backslash \{\omega\}$ with $\G$ so that the origin of $\G$ is contained in $\Lambda(\Gamma) \backslash \{\omega\}$. In this way, $\N = \Lambda(\Gamma) \backslash \{\omega\}$ is a Carnot subgroup of $\G = \partial X \backslash \{\omega\}$, which is equipped with the metric $d_{\omega,q}$.

We claim that $\N$ is also an Iwasawa group. Suppose not, and let $g \in \Gamma$ be arbitrary. 
As remarked in Section \ref{visualsec}, the induced boundary homeomorphism
$$g \colon \G \backslash \{g^{-1}(\omega)\} \rightarrow \G \backslash \{g(\omega)\}$$
is conformal, in the sense that it is smooth and its horizontal derivative is everywhere a similarity. In particular, the restriction
$$g \colon \N \backslash \{g^{-1}(\omega)\} \rightarrow \N \backslash \{g(\omega)\}$$
is also smooth with horizontal derivative everywhere a similarity, and so it is a conformal map on an open, connected subset of $\N$. By \cite[Theorem 4.1]{CO15}, this means that $g|_{\N \backslash \{g^{-1}(\omega)\}}$ is the restriction of an affine map, and in particular that $g(\omega) = \omega$. Thus, $\omega$ is a global fixed point for the action of $\Gamma$ on $\Lambda(\Gamma)$, a contradiction.

Let $S$ be the non-compact rank-one symmetric space with $\partial S$ locally modeled by the Iwasawa group $\N$. Note that $S$ is determined uniquely by the topological and Hausdorff dimensions of $\N$, or equivalently of $\Lambda(\Gamma)$ \cite[p. 34]{MTcdim}. Choosing any points $s \in S$ and $\zeta \in \partial S$, we know that $(\partial S \backslash \{\zeta\}, d_{\zeta,s})$ is bi-Lipschitz equivalent to $\N = (\Lambda(\Gamma) \backslash \{\omega\}, d_{\omega,q})$. 

Using that $(\partial S \backslash \{\zeta\}, d_{\zeta,s})$ and $(\partial S \backslash \{\zeta\}, d_s)$ are M\"obius equivalent, together with the fact that $(\Lambda(\Gamma) \backslash \{\omega\}, d_{\omega,q})$ and $(\Lambda(\Gamma) \backslash \{\omega\}, d_q)$ are M\"obius equivalent, we find that $(\partial S \backslash \{\zeta\}, d_s)$ is quasi-M\"obius equivalent to $(\Lambda(\Gamma) \backslash \{\omega\}, d_q)$ with linear distortion function $\eta(t)$. In particular, this implies that $(\partial S,d_s)$ and $(\Lambda(\Gamma),d_q)$ are bi-Lipschitz equivalent. As $\Hdim(\partial S) = \Hdim(\Lambda(\Gamma)) = Q >1$, we know that $S \neq \Hy_\R^2$. Finally, applying Lemma \ref{bourdonlemma} finishes the proof.
\end{proof}

\bibliography{carnot}{}
\bibliographystyle{plain}

\end{document}